\renewcommand*\subjclass[2][2010]{%
  \def\@subjclass{#2}%
  \@ifundefined{subjclassname@#1}{%
    \ClassWarning{\@classname}{Unknown edition (#1) of Mathematics
      Subject Classification; using '2010'.}%
  }{%
    \@xp\let\@xp\subjclassname\csname subjclassname@#1\endcsname
  }%
}
\newtheorem{lemma}{Lemma}
\newtheorem{corollary}{Corollary}
\newtheorem{remark}{Remark}
\newtheorem{proposition}{Proposition}
\theoremstyle{definition}
\renewcommand*\subjclass[2][2010]{%
  \def\@subjclass{#2}%
  \@ifundefined{subjclassname@#1}{%
    \ClassWarning{\@classname}{Unknown edition (#1) of Mathematics
      Subject Classification; using '1991'.}%
  }{%
    \@xp\let\@xp\subjclassname\csname subjclassname@#1\endcsname
  }%
}
\begin{document}

\title{Congruences for Wolstenholme primes}

\author{Romeo Me\v strovi\' c}

\address{Maritime Faculty, University of Montenegro, Dobrota 36,
 85330 Kotor, Montenegro} \email{romeo@ac.me}

{\renewcommand{\thefootnote}{}\footnote{2010 {\it Mathematics Subject 
Classification.} Primary 11B75; Secondary 11A07, 11B65, 11B68, 05A10.

{\it Keywords and phrases.} congruence,  prime power, 
Wolstenholme prime, Wolstenholme's theorem, Bernoulli numbers.}
\setcounter{footnote}{0}}

\maketitle

\begin{abstract}
A prime number $p$ is said to be  a Wolstenholme prime if it 
satisfies the congruence ${2p-1\choose p-1} \equiv 1 \,\,(\bmod{\,\,p^4})$.
For such a prime $p$, we establish  the expression for 
${2p-1\choose p-1}\,\,(\bmod{\,\,p^8})$
given in terms of the sums $R_i:=\sum_{k=1}^{p-1}1/k^i$ ($i=1,2,3,4,5,6)$.
Further, the expression in this congruence is 
reduced in terms of the sums $R_i$ ($i=1,3,4,5$).
Using this congruence, we prove  that for any Wolstenholme prime, 
$$
{2p-1\choose p-1}\equiv 1 -2p \sum_{k=1}^{p-1}\frac{1}{k}
-2p^2\sum_{k=1}^{p-1}\frac{1}{k^2}\pmod{p^7}. 
 $$
Moreover, using a recent result of the author \cite{Me}, we prove 
that the above congruence implies that a prime $p$ 
necessarily must be a Wolstenholme prime.

Applying a technique of Helou and Terjanian \cite{HT}, 
the above congruence is given as the 
expression involving the Bernoulli numbers. 
 \end{abstract}

\section{Introduction and Statements of  Results}

{\it Wolstenholme's theorem} (e.g., see \cite{W}, \cite{Gr})
asserts that if $p$ is a prime greater than 3, then 
the binomial coefficient
${2p-1\choose p-1}$ satisfies the congruence
 $$
{2p-1\choose p-1} \equiv 1 \pmod{p^3}.
   $$
It is well known (e.g., see \cite{HW}) that
this theorem is equivalent  to the 
assertion that for any prime $p\ge 5$ the numerator of the 
fraction 
 $$
1+\frac{1}{2}+\frac{1}{3}+\cdots+\frac{1}{p-1}
 $$ 
written in reduced form, is divisible by $p^2$. 
A. Granwille \cite{Gr} established broader generalizations of 
Wolstenholme's theorem. As an application, it is obtained in \cite{Gr}
that for a prime $p\ge 5$ there holds
  $$
{2p-1\choose p-1}\Big/{2p\choose p}^3\equiv
{3\choose 2}\Big/{2\choose 1}^3\pmod{p^5}.
  $$
Recently, by studying  Fleck's quotients,
Z.W. Sun and D. Wan (\cite[Corollary 1.5]{SW}) discovered a new extension of 
Wolstenholme's congruences. In particular, their result 
yields Wolstenholme's theorem and for a prime $p\ge 7$ the following 
new curious congruence
   $$
{4p-1\choose 2p-1} \equiv {4p\choose p}-1 \pmod{p^5}.
   $$
More recently, C. Helou and G. Terjanian \cite{HT} established many  
Wolstenholme's type congruences modulo $p^k$ with a prime $p$
and $k\in\mathbf N$ such that $k\le 6$. One of their main results 
(\cite[Proposition 2, pp. 488-489]{HT})
is a congruence off the form ${np\choose mp}\equiv f(n,m,p) {n\choose m}
\,(\bmod\,\, p)$,
where $p\ge 3$ is a prime number, $m,n,\in \mathbf N$ with 
$0\le m\le n$, and $f$ is the function on $m,n$ and $p$ involving 
Bernoulli numbers $B_k$. 
As an application, by (\cite[ Corollary 2(2), p. 493; also see Corollary 
6(2), p. 495)]{HT}, for any prime $p\ge 5$ we have 
$$  
{2p-1\choose p-1} \equiv 1-p^3B_{p^3-p^2-2}
+\frac{1}{3}p^5B_{p-3}-\frac{6}{5}p^5B_{p-5}\pmod{p^6}.
 $$
A similar congruence  modulo $p^7$ (Corollary~\ref{c2}) is obtained
in this paper for Wolstenholme primes.

A prime $p$ is said to be a {\it Wolstenholme prime} if it 
satisfies the congruence 
$$
{2p-1\choose p-1} \equiv 1 \,\,(\bmod{\,\,p^4}).
 $$
The two known such primes are 16843 and 2124679, and 
recently, R.J. McIntosh and E.L. Roettger \cite{MR} reported  that these
primes are only two  Wolstenholme primes less than $10^9$.
However, using the argument based on the prime number theorem, 
McIntosh  (\cite[p. 387]{M}) conjectured that there are infinitely many 
Wolstenholme primes, and that no  prime satisfies
the congruence ${2p-1\choose p-1} \equiv 1\,\, (\bmod{\,\,p^5})$.

The following result is basic in our investigations. 

\begin{proposition}\label{p1}
 Let $p$ be a Wolstenholme prime. Then
   \begin{eqnarray*}  
{2p-1\choose p-1} &\equiv& 1 +p \sum_{k=1}^{p-1}\frac{1}{k}
-\frac{p^2}{2} \sum_{k=1}^{p-1}\frac{1}{k^2}+
\frac{p^3}{3} \sum_{k=1}^{p-1}\frac{1}{k^3}-
\frac{p^4}{4} \sum_{k=1}^{p-1}\frac{1}{k^4}\\
&&+\frac{p^5}{5} \sum_{k=1}^{p-1}\frac{1}{k^5}-
\frac{p^6}{6} \sum_{k=1}^{p-1}\frac{1}{k^6}\pmod{p^8}.
   \end{eqnarray*}
\end{proposition}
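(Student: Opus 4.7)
The plan is to pass to $p$-adic logarithms in the product expression
$$\binom{2p-1}{p-1}=\prod_{k=1}^{p-1}\left(1+\frac{p}{k}\right).$$
Since each $p/k$ lies in $p\mathbb{Z}_p$ for $1\le k\le p-1$, each factor admits the convergent expansion $\log(1+p/k)=\sum_{j\ge 1}(-1)^{j-1}p^j/(jk^j)$. Summing over $k$ and interchanging the order of summation yields
$$L:=\log\binom{2p-1}{p-1}=\sum_{j\ge 1}\frac{(-1)^{j-1}}{j}\,p^jR_j,\qquad R_j:=\sum_{k=1}^{p-1}\frac{1}{k^j}.$$

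The first step is to truncate $L$ modulo $p^8$ down to the six terms appearing on the right-hand side of the proposition. The $j$-th summand has $p$-adic valuation $j-v_p(j)+v_p(R_j)$. The classical symmetry $k\mapsto p-k$ forces $R_j\equiv 0\pmod p$ whenever $j$ is odd with $1\le j\le p-2$, so the $j=7$ term already contributes valuation $\ge 8$. For $8\le j\le p-2$ the factor $p^j$ alone gives valuation $\ge 8$, and the remaining indices $j\ge p-1$ are handled directly, using $R_{p-1}\equiv -1\pmod p$ from Fermat's little theorem together with the fact that $p\ge 11$ for any Wolstenholme prime.

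The second step is to invoke the Wolstenholme hypothesis $\binom{2p-1}{p-1}\equiv 1\pmod{p^4}$. Since the $p$-adic logarithm is an isometric bijection of $1+p\mathbb{Z}_p$ onto $p\mathbb{Z}_p$ for odd $p$, this upgrades to $L\equiv 0\pmod{p^4}$. Consequently $L^m/m!\equiv 0\pmod{p^8}$ for every $m\ge 2$ (a trivial valuation estimate, since $m!$ is a $p$-unit in the relevant range and $v_p(L^m)\ge 4m$), whence
$$\binom{2p-1}{p-1}=\exp(L)\equiv 1+L\pmod{p^8},$$
which combined with the truncated form of $L$ above is exactly the desired congruence.

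The main obstacle is the valuation bookkeeping in the truncation step: the logarithmic series has infinitely many formal terms, and one must systematically verify that no hidden $p^7$ or $p^8$ contribution survives beyond $j=6$. Once the truncation is in place, the collapse $\exp(L)\equiv 1+L$ is automatic from the Wolstenholme hypothesis, and everything else is formal manipulation inside $\mathbb{Q}_p$.
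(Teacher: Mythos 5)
Your proof is correct, and it takes a genuinely different route from the paper. The paper expands $\prod_{k=1}^{p-1}(1+p/k)=1+\sum_k p^kH_k$ into elementary symmetric functions $H_k$ of $1,1/2,\dots,1/(p-1)$, truncates at $k=6$ using divisibility bounds on the $H_k$ (proved by induction via Newton's identities), and then converts each $H_n$ into $(-1)^{n-1}R_n/n$ modulo a carefully tracked power $p^{e_n}$ --- again via Newton's identities, with the Wolstenholme hypothesis entering through $R_1\equiv 0\ (\bmod\ p^3)$ and $R_2\equiv 0\ (\bmod\ p^2)$. Your $p$-adic logarithm produces the power sums $R_j$ with the coefficients $(-1)^{j-1}/j$ immediately, and compresses all of that bookkeeping into the single estimate $v_p(L^2/2)\ge 8$, which is exactly where the hypothesis $v_p(L)\ge 4$ is needed; for a generic prime one only has $v_p(L)\ge 3$ and the term $L^2/2$ would survive modulo $p^8$. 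Your valuation checks are sound: $v_p(R_j)\ge 0$ always, $v_p(R_7)\ge 1$ by the $k\mapsto p-k$ pairing, and $j-v_p(j)\ge 8$ for all $j\ge 8$ (using $p\ge 11$, which every Wolstenholme prime satisfies), so the tail of $L$ genuinely vanishes modulo $p^8$; likewise $v_p(L^m/m!)\ge 4m-v_p(m!)\ge 8$ for $m\ge 2$. What each approach buys: the paper's argument is entirely elementary and its intermediate results (the bounds on $H_n$ and the relations $H_n\equiv(-1)^{n-1}R_n/n$) are reused later in the paper, whereas your argument is shorter, makes the origin of the coefficients transparent, and isolates precisely the one place the Wolstenholme hypothesis is used. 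Two cosmetic points: the remark that $R_{p-1}\equiv -1\ (\bmod\ p)$ is unnecessary (only $v_p(R_j)\ge 0$ is needed for the tail), and $m!$ is not a $p$-unit for $m\ge p$, though the estimate $v_p(m!)\le (m-1)/(p-1)$ still gives the required bound trivially there.
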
  
The above congruence can be simplified as follows.

\begin{proposition}\label{p2}
 Let $p$ be a Wolstenholme prime. Then
  $$  
{2p-1\choose p-1} \equiv 1 +\frac{3p}{2} \sum_{k=1}^{p-1}\frac{1}{k}
-\frac{p^2}{4}\sum_{k=1}^{p-1}\frac{1}{k^2}
+\frac{7p^3}{12} \sum_{k=1}^{p-1}\frac{1}{k^3}
+\frac{5p^5}{12} \sum_{k=1}^{p-1}\frac{1}{k^5}\pmod{p^8}.
   $$  
\end{proposition}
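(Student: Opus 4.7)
The plan is to deduce Proposition~\ref{p2} directly from Proposition~\ref{p1} by algebraic manipulation; the Wolstenholme hypothesis is used only through its role in Proposition~\ref{p1} itself. The main tool consists of two identities arising from the bijection $k\mapsto p-k$ on $\{1,\dots,p-1\}$. For each $k$ coprime to $p$, expanding
$$
\frac{1}{(p-k)^i}=\frac{(-1)^i}{k^i(1-p/k)^i}\equiv\frac{(-1)^i}{k^i}\sum_{j=0}^{N-1}\binom{i+j-1}{j}\frac{p^j}{k^j}\pmod{p^N}
$$
and summing over $k$ gives, for odd $i$, the congruence
$$
2R_i\equiv -\sum_{j=1}^{N-1}\binom{i+j-1}{j}p^j R_{i+j}\pmod{p^N}.
$$
Specializing to $(i,N)=(1,8)$ and to $(i,N)=(5,3)$ produces
\begin{align*}
(\mathrm{I}_1)\quad & 2R_1+pR_2+p^2R_3+p^3R_4+p^4R_5+p^5R_6+p^6R_7+p^7R_8\equiv 0\pmod{p^8},\\
(\mathrm{I}_5)\quad & 2R_5+5pR_6+15p^2R_7\equiv 0\pmod{p^3}.
\end{align*}

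Next, I decompose $pR_1=\tfrac{3}{2}pR_1-\tfrac{1}{2}pR_1$ in Proposition~\ref{p1} and use $(\mathrm{I}_1)$ multiplied by $p/4$ to rewrite
$$
-\tfrac{1}{2}pR_1\equiv \tfrac{1}{4}\bigl(p^2R_2+p^3R_3+p^4R_4+p^5R_5+p^6R_6+p^7R_7\bigr)\pmod{p^8}.
$$
Substituting and collecting coefficients over the common denominator $60$, the contribution to $p^4R_4$ cancels automatically (since $\tfrac{1}{4}-\tfrac{1}{4}=0$) and one obtains
$$
\binom{2p-1}{p-1}\equiv 1+\tfrac{3p}{2}R_1-\tfrac{p^2}{4}R_2+\tfrac{7p^3}{12}R_3+\tfrac{9p^5}{20}R_5+\tfrac{p^6}{12}R_6+\tfrac{p^7}{4}R_7\pmod{p^8}.
$$
To finish, multiply $(\mathrm{I}_5)$ by $p^5/60$, yielding
$$
\tfrac{1}{30}p^5R_5+\tfrac{1}{12}p^6R_6+\tfrac{1}{4}p^7R_7\equiv 0\pmod{p^8},
$$
and subtract from the previous display: the $R_6$ and $R_7$ terms vanish, while the coefficient of $p^5R_5$ becomes $\tfrac{9}{20}-\tfrac{1}{30}=\tfrac{5}{12}$, producing exactly the congruence of Proposition~\ref{p2}.

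The only subtle point, and the place where one might slip, is the derivation of $(\mathrm{I}_1)$ and $(\mathrm{I}_5)$: one must justify truncating the $p$-adic binomial expansion of $1/(1-p/k)^i$ modulo $p^N$, noting that each $R_j$ lies in $\mathbf{Z}_p$ because every $k\in\{1,\ldots,p-1\}$ is coprime to $p$. Once these identities are in hand, the remainder of the argument is mechanical arithmetic with small fractions, and no further use of the Wolstenholme hypothesis is required.
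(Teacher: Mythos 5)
Your proof is correct, and I have checked the arithmetic: the reflection identity $2R_i\equiv-\sum_{j=1}^{N-1}\binom{i+j-1}{j}p^jR_{i+j}\pmod{p^N}$ for odd $i$ is valid (the negative-binomial expansion of $(1-p/k)^{-i}$ converges in $\mathbf{Z}_p$ and may be truncated modulo $p^N$ as you say), your $(\mathrm{I}_1)$ and $(\mathrm{I}_5)$ are correct instances of it, and the coefficient bookkeeping ($-\tfrac12+\tfrac14=-\tfrac14$, $\tfrac13+\tfrac14=\tfrac7{12}$, $\tfrac15+\tfrac14=\tfrac9{20}$, $-\tfrac16+\tfrac14=\tfrac1{12}$, then $\tfrac9{20}-\tfrac1{30}=\tfrac5{12}$) all checks out. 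The first half of your argument coincides with the paper's: your $(\mathrm{I}_1)$ is exactly the paper's Lemma~\ref{l13} with $r=7$ (proved there by the same pairing $1/i+1/(p-i)$, i.e.\ the $i=1$ case of your identity), and your decomposition $pR_1=\tfrac32 pR_1-\tfrac12 pR_1$ is algebraically the same move as the paper's substitution for $-\tfrac{p^4}{4}R_4$, arriving at the same intermediate congruence with coefficients $\tfrac9{20}$ on $p^5R_5$ and $\tfrac1{12}$ on $p^6R_6$. Where you genuinely diverge is the elimination of the $R_6$ term: the paper invokes Lemma~\ref{l12}(iv), $pR_6\equiv-\tfrac25R_5\pmod{p^4}$, whose proof runs through the Bernoulli-number expression for power sums (Lemma~\ref{l9}), Euler's theorem, and the von Staudt--Clausen theorem, whereas you apply the same reflection identity again with $(i,N)=(5,3)$ and cancel the $R_6$ and $R_7$ terms exactly. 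Your route is more elementary and self-contained --- it needs nothing beyond Proposition~\ref{p1}, Lemma~\ref{l4} (implicitly, to discard nothing, since you carry $R_7,R_8$ along and cancel them), and the reflection trick --- while the paper's detour through Bernoulli numbers is not wasted in context, since Lemma~\ref{l12} is reused in the proofs of Corollaries~\ref{c1} and~\ref{c2}. Note also that your $(\mathrm{I}_5)$ recovers the paper's Lemma~\ref{l12}(iv) only modulo $p^3$ rather than $p^4$, but as your cancellation shows, that weaker accuracy suffices here.
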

Reducing the modulus in the previous congruence,
we can obtain the following simpler congruences.

\begin{corollary}\label{c1}
 Let $p$ be a Wolstenholme prime. Then
   $$  
{2p-1\choose p-1} \equiv 1 -2p \sum_{k=1}^{p-1}\frac{1}{k}
-2p^2\sum_{k=1}^{p-1}\frac{1}{k^2}
 \equiv 1 +2p \sum_{k=1}^{p-1}\frac{1}{k}
+\frac{2p^3}{3}\sum_{k=1}^{p-1}\frac{1}{k^3}\pmod{p^7}.
    $$  
\end{corollary}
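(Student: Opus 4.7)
The plan is to derive both congruences of Corollary~\ref{c1} from Proposition~\ref{p2} by reducing the modulus from $p^8$ to $p^7$ and then simplifying the right-hand side with a few standard identities among the sums $R_i:=\sum_{k=1}^{p-1}1/k^i$. Subtracting the desired right-hand sides from the expression in Proposition~\ref{p2}, the first congruence of Corollary~\ref{c1} is equivalent to
\[
\tfrac{7p}{2}R_1+\tfrac{7p^2}{4}R_2+\tfrac{7p^3}{12}R_3+\tfrac{5p^5}{12}R_5\equiv 0\pmod{p^7},
\]
while, once the first is known, the second is equivalent to $2pR_1+p^2R_2+\tfrac{p^3}{3}R_3\equiv 0\pmod{p^7}$.

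The main tool is the family of reflection identities obtained from the substitution $k\mapsto p-k$ in $R_m$ together with the geometric expansion $(p-k)^{-m}=(-1)^m k^{-m}(1-p/k)^{-m}$. Summing over $k=1,\dots,p-1$ and separating by the parity of $m$ yields, for every $m\ge 1$ and to whatever $p$-adic accuracy one needs,
\[
2R_m\equiv -\sum_{j\ge 1}\binom{m+j-1}{j}p^jR_{m+j}\ \ (m\text{ odd}),\qquad \sum_{j\ge 1}\binom{m+j-1}{j}p^jR_{m+j}\equiv 0\ \ (m\text{ even}).
\]
Three instances are decisive: (a) the case $m=1$, giving $2R_1\equiv -pR_2-p^2R_3-p^3R_4-p^4R_5-p^5R_6\pmod{p^7}$; (b) the case $m=2$, giving $2pR_3+3p^2R_4+4p^3R_5+5p^4R_6\equiv 0\pmod{p^7}$; and (c) the case $m=4$, giving $R_5\equiv -\tfrac{5p}{2}R_6\pmod{p^2}$. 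In addition, since any Wolstenholme prime satisfies $p-1>6$, Fermat's little theorem yields $R_i\equiv 0\pmod p$ for $2\le i\le 6$; the tails of the reflection identities in (a)--(c) are killed by this divisibility, which is why the precisions stated are enough.

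To prove the first congruence, I substitute (a) into the $\tfrac{7p}{2}R_1$ term: the resulting $R_2$ contribution cancels $\tfrac{7p^2}{4}R_2$ exactly, leaving a $\mathbf{Z}_{(p)}$-linear combination of $p^3R_3,p^4R_4,p^5R_5,p^6R_6$. Adding the multiple $-\tfrac{7p^2}{12}$ of (b) eliminates the $R_3$ and $R_4$ coefficients modulo $p^7$, collapsing the expression to the form $p^5R_5+\tfrac{7p^6}{6}R_6$. Replacing $R_5$ by $-\tfrac{5p}{2}R_6$ via (c) then reduces this further to a scalar multiple of $p^6R_6$, which vanishes modulo $p^7$ by the Fermat divisibility $p\mid R_6$. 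The second congruence can be proved by a strictly parallel calculation, or more economically by subtracting the first from the proposed second expression and verifying directly via (a), (b), (c) that the difference $-4pR_1-2p^2R_2-\tfrac{2p^3}{3}R_3$ is $\equiv 0\pmod{p^7}$; the bookkeeping is even shorter.

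The main obstacle is clerical rather than conceptual: each application of (a) or (b) costs a power of $p$, so one has to check that the required precisions for each $R_i$ are actually available and that every leftover term either falls below $p^7$ or is absorbed by the divisibility $p\mid R_i$ for $i\ge 2$. Apart from this careful bookkeeping, the argument is elementary $p$-adic algebra and uses no input beyond Proposition~\ref{p2}, the reflection identities above, and Fermat's little theorem.
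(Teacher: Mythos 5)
Your argument is correct, and I verified the arithmetic: substituting (a) into $\tfrac{7p}{2}R_1$ turns the quantity $\tfrac{7p}{2}R_1+\tfrac{7p^2}{4}R_2+\tfrac{7p^3}{12}R_3+\tfrac{5p^5}{12}R_5$ into $-\tfrac{7}{6}p^3R_3-\tfrac{7}{4}p^4R_4-\tfrac{4}{3}p^5R_5-\tfrac{7}{4}p^6R_6$, your multiple $-\tfrac{7p^2}{12}$ of (b) removes the $R_3$ and $R_4$ terms leaving $p^5R_5+\tfrac{7}{6}p^6R_6$, and (c) together with $p\mid R_6$ finishes; the second congruence reduces likewise. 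The skeleton is the same as the paper's (start from Proposition~\ref{p2}; use the $m=1$ reflection identity, which is the paper's Lemma~\ref{l13}, to trade $R_1$ for $pR_2+p^2R_3+\cdots$; then close the system with a relation between $R_3$ and $R_4$), but you obtain that closing relation by a genuinely different and more elementary route. The paper proves $pR_4\equiv-\tfrac{2}{3}R_3\pmod{p^4}$ via Lemma~\ref{l12}(ii),(iii), i.e., by expressing $R_3$ and $R_4$ through the Bernoulli number $B_{p^4-p^3-4}$ using the Helou--Terjanian power-sum formula and von Staudt--Clausen, and similarly gets $pR_6\equiv-\tfrac{2}{5}R_5$ from Lemma~\ref{l12}(iv); your even-$m$ reflection identities with $m=2$ and $m=4$ deliver exactly these two relations from the symmetry $k\mapsto p-k$ alone, generalizing Lemma~\ref{l13} to all $m$ and making the proof of Corollary~\ref{c1} entirely Bernoulli-free. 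The trade-off is that the paper's Bernoulli machinery is needed anyway for Corollaries~\ref{c2} and~\ref{c3}. Two minor bookkeeping remarks: your (a) as stated modulo $p^7$ also requires $p\mid R_7$, which your blanket divisibility claim only asserts for $2\le i\le 6$ (it follows from the same Fermat argument, and in fact you only ever use (a) modulo $p^6$); and if you invoke the standard stronger fact $p^2\mid R_5$ (Lemma~\ref{l4}), the residual $p^5R_5$ term dies at once and step (c) becomes unnecessary.
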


The {\it Bernoulli numbers} $B_k$ ($k\in\mathbf N$) 
are defined by the generating function
   $$
\sum_{k=0}^{\infty}B_k\frac{x^k}{k!}=\frac{x}{e^x-1}\,.
  $$
It is easy to find the values $B_0=1$, $B_1=-\frac{1}{2}$, 
$B_2=\frac{1}{6}$, $B_4=-\frac{1}{30}$, and $B_n=0$ for odd $n\ge 3$. 
Furthermore, $(-1)^{n-1}B_{2n}>0$ for all $n\ge 1$. 
These and many other properties can be found, for instance, in \cite{IR}. 

The congruence from the Corollary~\ref{c1} is given in terms of  the Bernoulli numbers
by the following result.

\begin{corollary}\label{c2}
 Let $p$ be a Wolstenholme prime. Then
 $$  
{2p-1\choose p-1} \equiv 1-p^3B_{p^4-p^3-2}
-\frac{3}{2}p^5B_{p^2-p-4}+\frac{3}{10}p^6B_{p-5}\pmod{p^7}.
 $$
\end{corollary}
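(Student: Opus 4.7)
By Corollary~\ref{c1}, we may begin from
\[
\binom{2p-1}{p-1}\equiv 1-2pS_1-2p^2S_2\pmod{p^7},
\]
where $S_i:=\sum_{k=1}^{p-1}1/k^i$, so the task reduces to re-expressing $-2pS_1-2p^2S_2$ modulo $p^7$ in terms of Bernoulli numbers. The plan is to adapt the technique of Helou and Terjanian \cite{HT}, which starts from Faulhaber's identity for sums of powers, inverts it to yield congruences for the reciprocal power sums $S_i$ in terms of Bernoulli numbers, and then applies Kummer's congruence together with its $p$-adic strengthenings to lift Bernoulli indices to higher orders.

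First I would write $pS_1$ and $p^2S_2$ modulo $p^7$ as rational linear combinations $\sum c_j p^{m_j}B_{n_j}$ of Bernoulli numbers. Because $\sum_{k=1}^{p-1}k^{-i}\equiv 0\pmod{p}$ for $i\not\equiv 0\pmod{p-1}$ and because the Wolstenholme hypothesis $p^4\mid\binom{2p-1}{p-1}-1$ (equivalent to $p\mid B_{p-3}$) kills a further layer of contributions, the only Bernoulli-number residue classes modulo $p-1$ that can appear up to order $p^7$ are $n\equiv-2$ (the $B_{p-3}$-class) and $n\equiv-4$ (the $B_{p-5}$-class).

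Next I would choose the lifts of these classes that match the three indices in the statement. Note that $p^4-p^3-2=p^3(p-1)-2$ and $p^2-p-4=p(p-1)-4$, while $p-5=(p-1)-4$, so the three indices are the natural $p$-adic lifts of $-2,-4,-4$ modulo $p-1$ at three different orders. By iterated Kummer-type congruences any two lifts in a common class, divided by the index, agree modulo a controlled power of $p$; I would use these to collapse the $-2$-class expansion into the single term $-p^3B_{p^4-p^3-2}$, and to split the $-4$-class contribution into the two orders $-\tfrac{3}{2}p^5B_{p^2-p-4}$ and $\tfrac{3}{10}p^6B_{p-5}$.

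The main obstacle is the rational bookkeeping: one must verify that the Glaisher-type contributions of $pS_1$ and $p^2S_2$ combine to produce coefficients exactly $-1$, $-3/2$ and $3/10$ after Kummer's ladder is applied, and that the denominators encountered (only $2$, $3$, $5$, all units modulo $p\ge 7$) behave correctly. The Wolstenholme condition will also be needed to ensure that the intermediate $p^4$-order contribution vanishes, so that no Bernoulli term at that order appears in the final expression and only the three displayed terms survive.
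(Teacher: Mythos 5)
Your overall strategy---express the harmonic-type sums via the Helou--Terjanian power-sum congruence and then align Bernoulli indices with Kummer's congruences---is exactly the paper's technique, but you diverge at the starting point in a way that matters. The paper does \emph{not} work from the first congruence of Corollary~\ref{c1}; it works from the second one, ${2p-1\choose p-1}\equiv 1+2pR_1+\frac{2}{3}p^3R_3\pmod{p^7}$, precisely so that only $R_1\pmod{p^6}$ and $R_3\pmod{p^4}$ need Bernoulli expansions, and these are supplied by Lemma~\ref{l12}(i) and (ii). Your choice of $1-2pS_1-2p^2S_2$ forces you to also expand $S_2\pmod{p^5}$ in Bernoulli numbers, a congruence that appears nowhere in the paper (Lemma~\ref{l12} covers $R_1,R_3,R_4$ and a relation between $R_5,R_6$, but not $R_2$). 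It is obtainable by the same method---one gets $R_2\equiv pB_{p^5-p^4-2}+\binom{p^5-p^4-2}{2}\frac{p^3}{3}B_{p^5-p^4-4}\pmod{p^5}$ and then climbs down with Kummer---but you would have to derive and verify it, and this extra branch is where most of the new coefficient bookkeeping would live.

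Two further points. First, your remark that the Wolstenholme hypothesis ``kills the intermediate $p^4$-order contribution'' is not quite what happens: no $p^4$-order Bernoulli term arises at all, simply because the relevant Bernoulli numbers of odd index vanish. What the Wolstenholme condition (via $p\mid B_{p-3}$, Lemma~\ref{l3}) actually kills is the order-$p^6$ term $\frac{p^6}{3}B_{p-3}$ coming from $2pR_1$. Second, the entire substance of the proof is the verification you defer---that the expansions combine to give exactly the coefficients $-1$, $-\frac{3}{2}$, $\frac{3}{10}$ on $B_{p^4-p^3-2}$, $B_{p^2-p-4}$, $B_{p-5}$. As written, the proposal is a correct plan with the right tools, but the decisive computation (and the needed expansion of $S_2$) is missing, so it does not yet constitute a proof.
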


The above congruence can be given by the following 
expression involving lower order Bernoulli numbers.

\begin{corollary}\label{c3}
 Let $p$ be a Wolstenholme prime. Then
        \begin{eqnarray*}    
&&{2p-1\choose p-1} \equiv 1-
p^3\left(\frac{8}{3}B_{p-3}
-3B_{2p-4}+\frac{8}{5}B_{3p-5}-\frac{1}{3}B_{4p-6}\right)-
p^4\left(\frac{8}{9}B_{p-3}\right.\\
&&-\frac{3}{2}B_{2p-4}+\left.\frac{24}{25}B_{3p-5}-\frac{2}{9}B_{4p-6}\right)
-p^5\left(\frac{8}{27}B_{p-3}-\frac{3}{4}B_{2p-4}+\frac{72}{125}B_{3p-5}
\right.\\
&&-\frac{4}{27}B_{4p-6}+\left.\frac{12}{5}B_{p-5}-B_{2p-6}\right)
-\frac{2}{25}p^6B_{p-5}\pmod{p^7}.
 \end{eqnarray*}    
\end{corollary}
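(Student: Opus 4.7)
The strategy is to begin with Corollary~\ref{c2} and reduce the two Bernoulli numbers of large index appearing there, namely $B_{p^4-p^3-2}$ and $B_{p^2-p-4}$, to explicit $\mathbb{Q}$-linear combinations of Bernoulli numbers of low index, modulo $p^4$ and $p^2$ respectively. Substituting these expansions into Corollary~\ref{c2} and collecting coefficients of $p^3,p^4,p^5,p^6$, together with two simplifications exploiting the Wolstenholme hypothesis, yields the formula stated in Corollary~\ref{c3}.

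The key observation is that $p^4-p^3-2$ lies in the same residue class modulo $p-1$ as $p-3, 2p-4, 3p-5, 4p-6$, and $p^2-p-4$ lies in the same residue class modulo $p-1$ as $p-5, 2p-6$. On a fixed residue class modulo $p-1$ with representative $s$, the function
$$
f_s(k) \;:=\; (1-p^{s+k(p-1)-1})\,\frac{B_{s+k(p-1)}}{s+k(p-1)}
$$
is the restriction to $\mathbb{Z}$ of a $p$-adic analytic function (values of the Kubota--Leopoldt $L$-function), so its Mahler coefficients satisfy $v_p(\Delta^j f_s(0))\geq j$. Truncating at $j=3$ with $s=p-3$ and $k=p^3-1$ yields
$$
f_{p-3}(p^3-1) \;\equiv\; 4\,f_{p-3}(0) - 6\,f_{p-3}(1) + 4\,f_{p-3}(2) - f_{p-3}(3) \pmod{p^4},
$$
and truncating at $j=1$ with $s=p-5$ and $k=p-1$ handles $B_{p^2-p-4}$ modulo $p^2$. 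Multiplying each side by the corresponding factor $s+k(p-1)$, noting that the factors $(1-p^{s+k(p-1)-1})$ are $\equiv 1$ to the required precision (since $p$ is a Wolstenholme prime, hence very large), and expanding each $1/(jp-(j+2))$ as a geometric series in $p$ through order $p^3$, expresses $B_{p^4-p^3-2}$ modulo $p^4$ as a combination of $B_{p-3}, B_{2p-4}, B_{3p-5}, B_{4p-6}$, and $B_{p^2-p-4}$ modulo $p^2$ as a combination of $B_{p-5}, B_{2p-6}$.

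Substituting these into Corollary~\ref{c2}, the contributions at orders $p^3, p^4, p^5$ already give the rational coefficients displayed in Corollary~\ref{c3}, while at order $p^6$ two simplifications complete the proof. First, the Wolstenholme condition is equivalent to $p\mid B_{p-3}$ (via $\binom{2p-1}{p-1}\equiv 1+\tfrac{1}{3}p^3 B_{p-3}\pmod{p^4}$), and then Kummer's congruence modulo $p$ forces $p\mid B_n$ for every $n\equiv p-3\pmod{p-1}$ with $n\geq 2$, so every contribution $p^6 B_n$ with $n\in\{p-3, 2p-4, 3p-5, 4p-6\}$ vanishes modulo $p^7$. Second, the Kummer relation $B_{2p-6}/(2p-6)\equiv B_{p-5}/(p-5)\pmod{p}$ gives $p^6 B_{2p-6}\equiv \tfrac{6}{5}\,p^6 B_{p-5}\pmod{p^7}$, so the three $p^6$-contributions in the residue class of $p-5$ collapse into the single coefficient $-\tfrac{2}{25}$ of $p^6 B_{p-5}$.

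The main obstacle is the bookkeeping: explicitly computing the Mahler coefficients and the Taylor expansions of each $1/(jp-(j+2))$ through order $p^3$, then tracking every rational coefficient through the multiplications and the final Kummer-based simplifications to verify that they match the statement verbatim.
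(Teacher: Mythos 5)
Your proposal is correct and follows essentially the same route as the paper: starting from Corollary~\ref{c2}, expanding $B_{p^4-p^3-2}$ modulo $p^4$ as $4,-6,4,-1$ times the ratios $B_{k(p-1)-2}/(k(p-1)-2)$ and $B_{p^2-p-4}$ modulo $p^2$ via the combination $2,-1$, then expanding the denominators $1/(kp-(k+2))$ in powers of $p$ and using $p\mid B_{p-3}$ (and its Kummer mod-$p$ consequences) together with $B_{2p-6}\equiv\frac{6}{5}B_{p-5}\pmod{p}$ to clean up the $p^6$ terms. The only difference is cosmetic: you justify the key iterated congruence via the Mahler-coefficient bound for the $p$-adic $L$-function, whereas the paper obtains the identical linear combination (its congruence (\ref{(26)}), following Helou--Terjanian) by combining the two classical Kummer congruences of Lemmas~\ref{l10} and~\ref{l11}.
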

Combining the first congruence in Proposition~\ref{p1} and a recent
result of the author in \cite[Theorem 1.1]{Me}, 
we obtain a new characterization of Wolstenholme prime 
as follows. 

\begin{corollary}\label{c4} {\rm (\cite[Remark 1.6]{Me})}.
 A prime  $p$ is  a Wolstenholme prime if and only if
  $$  
{2p-1\choose p-1} \equiv 1 -2p \sum_{k=1}^{p-1}\frac{1}{k}
-2p^2\sum_{k=1}^{p-1}\frac{1}{k^2}
 \pmod{p^7}.
   $$
\end{corollary}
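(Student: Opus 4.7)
The plan is to prove the two implications separately.

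For the forward direction, nothing new is required: if $p$ is a Wolstenholme prime, then the first congruence of Corollary~\ref{c1} is exactly the asserted congruence. The essential content of the corollary therefore lies in the converse, for which the statement itself signals the method, namely to combine the expansion of Proposition~\ref{p1} with \cite[Theorem 1.1]{Me}.

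Assuming the displayed congruence holds modulo $p^7$, my plan is to subtract from it the unconditional expansion of $\binom{2p-1}{p-1}$ modulo $p^7$ supplied by \cite[Theorem 1.1]{Me}, which, unlike Proposition~\ref{p1}, is valid for every prime $p\ge 7$ without the Wolstenholme hypothesis. The binomial coefficient then disappears, leaving a relation purely among the power sums $R_i := \sum_{k=1}^{p-1} 1/k^i$ modulo $p^7$. Using the classical bounds $R_1 \equiv 0 \pmod{p^2}$ and $R_2 \equiv 0 \pmod{p}$, together with the analogous standard bounds on $R_3,\ldots,R_6$ obtained from the pairing $k\leftrightarrow p-k$, I would control the cross-term corrections that arise through the Newton identities applied to $\prod_{k=1}^{p-1}(1+p/k)$ and reduce the surviving relation to $R_1 \equiv 0 \pmod{p^3}$. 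This in turn is equivalent to the Wolstenholme condition $\binom{2p-1}{p-1} \equiv 1 \pmod{p^4}$, completing the converse.

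The main anticipated obstacle is the valuation bookkeeping in the subtraction step. Under only the bounds from Wolstenholme's theorem (as opposed to the stronger bounds afforded by the Wolstenholme-prime hypothesis), cross-terms such as $p^2 R_1^2$ and $p^3 R_1 R_2$ sit at the $p^6$ level, which is borderline for the modulus $p^7$. Extracting the clean conclusion $R_1 \equiv 0 \pmod{p^3}$ therefore requires the precise form of \cite[Theorem 1.1]{Me} and term-by-term control of $p$-adic valuations, rather than a purely formal manipulation.
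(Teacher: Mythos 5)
Your proposal is correct and follows essentially the same route as the paper: the forward implication is exactly the first congruence of Corollary~\ref{c1}, and the converse is obtained by comparing the hypothesized congruence with the unconditional mod $p^7$ expansion of ${2p-1\choose p-1}$ from \cite[Theorem 1.1]{Me}, whose surviving correction term sits at the $p^6$ level and vanishes modulo $p^7$ precisely when $R_1\equiv 0\pmod{p^3}$, i.e.\ when $p$ is a Wolstenholme prime (Lemma~\ref{l3}). The paper itself supplies no more detail than this, simply citing \cite[Remark 1.6]{Me}, so your identification of the borderline $p^6$ cross-terms as the only delicate point is an accurate account of where the real work lies.
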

 
\begin{remark} {\rm A computation shows that no prime  $p<10^5$
satisfies the second congruence from Corollary~\ref{c1}, except the Wolstenholme 
prime $16843$.
Accordingly, an interesting question is as follows:
{\it Is it  true that the second congruence  from Corollary~{\rm\ref{c1}} 
yields that a prime $p$ is necessarily 
a Wolstenholme prime}?
We conjecture that this is true.}
 \end{remark}
\section{Proof of Proposition~\ref{p1} }

For the proof of Proposition~\ref{p1}, we will need some auxiliary results.

\begin{lemma}\label{l1}
 For any prime $p\ge 7$, we have
  \begin{equation}\label{(1)}
2\sum_{k=1}^{p-1}\frac{1}{k}\equiv
-p\sum_{k=1}^{p-1}\frac{1}{k^2}\pmod{p^4}
  \end{equation}
\end{lemma}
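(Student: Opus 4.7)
The plan is to apply the classical pairing $k\leftrightarrow p-k$, which permutes $\{1,2,\ldots,p-1\}$. The first step would be to write
$$2\sum_{k=1}^{p-1}\frac{1}{k}=\sum_{k=1}^{p-1}\left(\frac{1}{k}+\frac{1}{p-k}\right)=p\sum_{k=1}^{p-1}\frac{1}{k(p-k)}.$$
Next I would expand $\frac{1}{p-k}=-\frac{1}{k}(1-p/k)^{-1}$ as a $p$-adically convergent geometric series and truncate modulo $p^3$, obtaining
$$\frac{1}{k(p-k)}\equiv-\frac{1}{k^2}-\frac{p}{k^3}-\frac{p^2}{k^4}\pmod{p^3}.$$
Summing and multiplying by $p$ then reduces the desired identity to
$$2\sum_{k=1}^{p-1}\frac{1}{k}\equiv-p\sum_{k=1}^{p-1}\frac{1}{k^2}-p^2\sum_{k=1}^{p-1}\frac{1}{k^3}-p^3\sum_{k=1}^{p-1}\frac{1}{k^4}\pmod{p^4},$$
so everything boils down to showing that the last two terms contribute nothing modulo $p^4$.

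For that I would need the two auxiliary estimates $\sum_{k=1}^{p-1}1/k^3\equiv 0\pmod{p^2}$ and $\sum_{k=1}^{p-1}1/k^4\equiv 0\pmod{p}$. The second is immediate because $k\mapsto k^{-1}$ permutes $(\mathbf{Z}/p\mathbf{Z})^{\times}$, giving $\sum 1/k^4\equiv\sum k^4\pmod{p}$; the right-hand side vanishes modulo $p$ as soon as $p-1\nmid 4$, which is exactly where the hypothesis $p\geq 7$ enters. For the cubic sum I would apply the $k\leftrightarrow p-k$ pairing a second time: expanding $(p-k)^{-3}=-k^{-3}(1-p/k)^{-3}$ modulo $p^2$ and summing yields $2\sum 1/k^3\equiv -3p\sum 1/k^4\pmod{p^2}$, which now vanishes modulo $p^2$ by what we have just established (using that $p$ is odd to divide out the factor $2$).

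No step here is a serious obstacle: the whole lemma is a routine combination of the $k\leftrightarrow p-k$ symmetry with elementary power-sum identities modulo $p$. The only point that needs genuine care is the bookkeeping of $p$-adic orders in the truncated geometric series so that the error term in the expansion of $1/(k(p-k))$ really lies in $p^3\mathbf{Z}_p$, and the observation that $p\geq 7$ is exactly what is required to guarantee $p-1\nmid 4$.
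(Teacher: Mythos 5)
Your proof is correct, but it is genuinely different from what the paper does: the paper gives no argument at all for this lemma, simply citing it as congruence (14) in the proof of Zhao's Theorem 3.2 in [Z2], whereas you supply a self-contained elementary derivation. Your route in fact reassembles two ingredients that the paper only introduces later and attributes elsewhere: your geometric-series expansion of $p/(k(p-k))$ is precisely the paper's Lemma~\ref{l13} specialized to $r=3$, giving $2R_1\equiv -pR_2-p^2R_3-p^3R_4\pmod{p^4}$, and the two auxiliary facts you then need, $R_3\equiv 0\pmod{p^2}$ and $R_4\equiv 0\pmod{p}$, are exactly the cases $n=3,4$ of the paper's Lemma~\ref{l4} (cited to Bayat). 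Your proofs of those auxiliary facts are also sound: the inversion $k\mapsto k^{-1}$ reduces $R_4$ to the power sum $\sum_{j=1}^{p-1}j^4\equiv 0\pmod{p}$ since $p-1\nmid 4$ for $p\ge 7$ (correctly identifying the only place the hypothesis $p\ge 7$ is used), and a second application of the $k\leftrightarrow p-k$ pairing gives $2R_3\equiv -3pR_4\equiv 0\pmod{p^2}$. The sign bookkeeping in the expansions of $1/(p-k)$ and $1/(p-k)^3$ checks out. What your approach buys is independence from the external reference; what the paper's citation buys is brevity and, in Zhao's original, a stronger statement from which this is a special case.
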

\begin{proof}
The above congruence is in fact, the congruence 
(14) in  (\cite[ Proof of Theorem 3.2]{Z2}). 
\end{proof}

\begin{lemma}\label{l2} 
For any prime $p\ge 7$, we have
       \begin{equation}\label{(2)}   
{2p-1\choose p-1} \equiv 1 +2p \sum_{k=1}^{p-1}\frac{1}{k}\pmod{p^5},
  \end{equation}
and 
     \begin{equation}\label{(3)}
{2p-1\choose p-1} \equiv 1 - p^2 \sum_{k=1}^{p-1}\frac{1}{k^2}\pmod{p^5}.
  \end{equation}   
\end{lemma}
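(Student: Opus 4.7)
The plan is to derive a common intermediate congruence
$$
\binom{2p-1}{p-1}\equiv 1+pR_{1}-\tfrac{p^{2}}{2}R_{2}\pmod{p^{5}},
$$
from which both (\ref{(2)}) and (\ref{(3)}) follow by a single application of Lemma~\ref{l1}. Writing $R_{i}:=\sum_{k=1}^{p-1}1/k^{i}$ (as in the paper), the two congruences in Lemma~\ref{l2} are literally two different ways of eliminating one of $pR_{1}$ or $p^{2}R_{2}$ from the intermediate congruence via the identity $2R_{1}\equiv -pR_{2}\pmod{p^{4}}$.

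First I would write
$$
\binom{2p-1}{p-1}=\prod_{k=1}^{p-1}\frac{p+k}{k}=\prod_{k=1}^{p-1}\left(1+\frac{p}{k}\right)=\sum_{j=0}^{p-1}p^{j}\sigma_{j},
$$
where $\sigma_{j}$ is the $j$-th elementary symmetric function of $1/1,\dots,1/(p-1)$. By Newton's identities,
\begin{align*}
\sigma_{1}&=R_{1},\qquad 2\sigma_{2}=R_{1}^{2}-R_{2},\qquad 6\sigma_{3}=R_{1}^{3}-3R_{1}R_{2}+2R_{3},\\
24\sigma_{4}&=R_{1}^{4}-6R_{1}^{2}R_{2}+3R_{2}^{2}+8R_{1}R_{3}-6R_{4}.
\end{align*}

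Next I would invoke the standard divisibility facts available for $p\ge 7$: Wolstenholme's theorem gives $R_{1}\equiv 0\pmod{p^{2}}$; the pairing $k\leftrightarrow p-k$ together with $R_{4}\equiv 0\pmod{p}$ yields $R_{3}\equiv 0\pmod{p^{2}}$; and the classical observation that $k\mapsto 1/k$ permutes $(\mathbf{Z}/p\mathbf{Z})^{\times}$ gives $R_{i}\equiv \sum_{k=1}^{p-1}k^{i}\equiv 0\pmod{p}$ for $1\le i\le p-2$. Feeding these into the Newton expansions, every term involving $R_{1}^{2}$, $R_{1}R_{2}$, $R_{1}R_{3}$, $R_{2}^{2}$ or $R_{1}^{4}$ has $p$-adic valuation $\ge 5$ after multiplication by the corresponding $p^{j}$, and $p^{3}R_{3}$ and $p^{4}R_{4}$ vanish mod $p^{5}$ for the same reason. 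The remaining $p^{j}\sigma_{j}$ with $j\ge 5$ are automatically $\equiv 0\pmod{p^{5}}$ since every $\sigma_{j}$ is a $p$-adic integer (the denominators $k<p$ are all prime to $p$). What survives is exactly
$$
\binom{2p-1}{p-1}\equiv 1+pR_{1}-\tfrac{p^{2}}{2}R_{2}\pmod{p^{5}}.
$$

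Finally, Lemma~\ref{l1} says $pR_{2}\equiv -2R_{1}\pmod{p^{4}}$, hence $p^{2}R_{2}\equiv -2pR_{1}\pmod{p^{5}}$. Substituting $-\tfrac{p^{2}}{2}R_{2}\equiv pR_{1}$ into the intermediate congruence yields (\ref{(2)}); substituting $pR_{1}\equiv -\tfrac{p^{2}}{2}R_{2}$ yields (\ref{(3)}). The only genuinely delicate step is the bookkeeping of $p$-adic valuations in the $\sigma_{4}$ term, in particular that $p^{4}R_{2}^{2}$ really is absorbed into the $p^{5}$ error by virtue of $R_{2}\equiv 0\pmod{p}$; everything else is mechanical once Lemma~\ref{l1} and the Glaisher-type congruences $R_{1}, R_{3}\equiv 0\pmod{p^{2}}$ are on the table.
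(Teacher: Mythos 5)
Your proof is correct, but it follows a genuinely different route from the paper's. The paper obtains (\ref{(2)}) by quoting Zhao's $p^5$-extension of Wolstenholme's theorem (\cite[Theorem 3.2]{Z2}), namely ${np\choose rp}\big/{n\choose r}\equiv 1+w_pnr(n-r)p^3\pmod{p^5}$ with $w_p\equiv R_1/p^2\pmod{p^2}$, specialized to $n=2$, $r=1$, and then deduces (\ref{(3)}) from (\ref{(2)}) via Lemma~\ref{l1}. You instead expand ${2p-1\choose p-1}=\prod_{k=1}^{p-1}(1+p/k)$ into elementary symmetric functions, convert to power sums by Newton's identities, and discard everything except $1+pR_1-\tfrac{p^2}{2}R_2$ using $R_1,R_3\equiv 0\pmod{p^2}$ and $R_2,R_4\equiv 0\pmod{p}$; both target congruences then follow symmetrically from Lemma~\ref{l1}. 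Your valuation bookkeeping checks out: $p^2R_1^2$, $p^3R_1R_2$, $p^4R_2^2$, $p^4R_1R_3$ all have valuation at least $6$, while $p^3R_3$ and $p^4R_4$ have valuation at least $5$, and $p^j\sigma_j\equiv 0\pmod{p^5}$ for $j\ge 5$ since the $\sigma_j$ are $p$-integral; the divisions by $2$, $6$, $24$ are harmless for $p\ge 7$. What your route buys is self-containedness --- it avoids the black-box appeal to \cite{Z2}, and it is essentially the same mechanism (the identity ${2p-1\choose p-1}=1+\sum_kp^kH_k$ together with Newton's formula) that the paper itself deploys later, via Lemmas~\ref{l6} and~\ref{l7}, to prove Proposition~\ref{p1} modulo $p^8$; your argument is that computation truncated at $p^5$, where the weaker hypothesis $R_1\equiv 0\pmod{p^2}$ suffices and holds for every prime $p\ge 7$, not only Wolstenholme primes. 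What the paper's route buys is brevity, at the price of importing a nontrivial external result.
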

\begin{proof} 
 Let $R_1(p)=\sum_{k=1}^{p-1}1/k$.
Following  (\cite[Definition 3.1]{Z2}) we define $w_p<p^2$ to be the unique 
nonnegative  integer such that $w_p\equiv  R_1(p)/p^2\,\,(\bmod{\,\,p^2})$.
Then by (\cite[ Theorem 3.2]{Z2}), for all nonnegative integers $n$ and $r$
with $n\ge r$,
  \begin{equation}\label{(4)} 
{np\choose rp}\bigg/{n\choose r} \equiv 1 +w_p nr(n-r)p^3\pmod{p^5}.
  \end{equation}
Since $\frac 12{2p\choose p}={2p-1\choose p-1}$, 
taking $n=2$ and $r=1$ into  (\ref{(4)}), it becomes 
 $$ 
{2p-1\choose p-1}\equiv 1 +2w_pp^3 \pmod{p^5},
  $$
which is actually (\ref{(2)}). Now the congruence (\ref{(3)}) 
follows immediately from (\ref{(2)}) and (\ref{(1)}) of Lemma~\ref{l1}.
\end{proof}

\begin{lemma}\label{l3}
 The following statements about a prime
$p\ge 7$ are equivalent:
\begin{itemize}
\item[(i)] $p$  is a Wolstenholme prime;\\\indent
\item[(ii) ]
 $\displaystyle\sum_{k=1}^{p-1}\frac{1}{k}\equiv
0\pmod{p^3};$\\
\item[(iii)] 
$\displaystyle\sum_{k=1}^{p-1}\frac{1}{k^2}\equiv
0\pmod{p^2}$; \\\indent
\item[(iv)] $p$ divides the numerator of the Bernoulli number 
$B_{p-3}.$
\end{itemize}
\end{lemma}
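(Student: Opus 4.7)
The plan is to establish the chain of equivalences (i) $\Leftrightarrow$ (ii) $\Leftrightarrow$ (iii) $\Leftrightarrow$ (iv) by combining the identities already recorded in Lemmas~\ref{l1} and~\ref{l2} with a classical congruence of Glaisher that links the harmonic sums $R_1:=\sum_{k=1}^{p-1}1/k$ and $R_2:=\sum_{k=1}^{p-1}1/k^2$ to the Bernoulli number $B_{p-3}$.

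\textbf{(i) $\Leftrightarrow$ (ii):} The congruence (\ref{(2)}) of Lemma~\ref{l2} reads ${2p-1\choose p-1}\equiv 1+2pR_1\pmod{p^5}$. Hence ${2p-1\choose p-1}\equiv 1\pmod{p^4}$ is equivalent to $2pR_1\equiv 0\pmod{p^4}$, and since $p\ge 7$ is odd this is equivalent to $R_1\equiv 0\pmod{p^3}$.

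\textbf{(i) $\Leftrightarrow$ (iii):} In the same way, the congruence (\ref{(3)}) of Lemma~\ref{l2} gives ${2p-1\choose p-1}\equiv 1-p^2R_2\pmod{p^5}$, so $p$ is a Wolstenholme prime if and only if $p^2R_2\equiv 0\pmod{p^4}$, i.e.\ $R_2\equiv 0\pmod{p^2}$. As a sanity check, (ii) $\Leftrightarrow$ (iii) also follows at once from Lemma~\ref{l1}, since $2R_1\equiv -pR_2\pmod{p^4}$ implies $R_1\equiv 0\pmod{p^3}$ iff $pR_2\equiv 0\pmod{p^3}$ iff $R_2\equiv 0\pmod{p^2}$.

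\textbf{(iii) $\Leftrightarrow$ (iv):} Here I would invoke the classical congruence of Glaisher, valid for every prime $p\ge 5$:
\begin{equation*}
\sum_{k=1}^{p-1}\frac{1}{k^2}\equiv \frac{2p}{3}\,B_{p-3}\pmod{p^2}.
\end{equation*}
Given this, $R_2\equiv 0\pmod{p^2}$ is equivalent to $\frac{2p}{3}B_{p-3}\equiv 0\pmod{p^2}$, which (since $\gcd(6,p)=1$) is equivalent to $p$ dividing the numerator of $B_{p-3}$. The single nontrivial ingredient here is Glaisher's congruence itself; once cited, the equivalence is immediate. This is the step I expect to be the main obstacle, in the sense that it is the only one that is not purely mechanical from earlier lemmas, so I would state it as a citation to Glaisher (or equivalently, derive it quickly from the standard expansion $\sum_{k=1}^{p-1}k^{2j}\equiv pB_{2j}\pmod{p^2}$ for $0<2j<p-1$, combined with Fermat's little theorem to convert $1/k^2$ to $k^{p-3}$).
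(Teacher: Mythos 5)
Your proof is correct and follows essentially the same route as the paper: the equivalences (i) $\Leftrightarrow$ (ii) $\Leftrightarrow$ (iii) come from reading the congruences (\ref{(2)}) and (\ref{(3)}) of Lemma~\ref{l2} modulo $p^4$, and the Bernoulli condition is brought in by citing a classical Glaisher congruence. The only (immaterial) difference is which form of Glaisher you quote: the paper uses ${2p-1\choose p-1}\equiv 1-\frac{2}{3}p^3B_{p-3}\pmod{p^4}$ to get (i) $\Leftrightarrow$ (iv) directly, whereas you use the equivalent statement $\sum_{k=1}^{p-1}1/k^2\equiv\frac{2p}{3}B_{p-3}\pmod{p^2}$ to get (iii) $\Leftrightarrow$ (iv); the two are interchangeable via (\ref{(3)}).
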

\begin{proof}
 The equivalences (i)$\Leftrightarrow$(ii)$\Leftrightarrow$(iii)
are immediate from Lemma~\ref{l2} if we consider the congruences (\ref{(2)}) 
and (\ref{(3)})  modulo $p^4$. Further, by a special case of 
Glaisher's congruence (\cite[p. 21]{Gl1}, \cite[ p. 323]{Gl2};   
also cf. \cite[  Theorem 2]{M}), we have
   $$
{2p-1\choose p-1}\equiv 1 -\frac{2}{3}p^3B_{p-3}\pmod{p^4},
   $$
whence follows  the equivalence 
(i)$\Leftrightarrow$(iv). This concludes the proof. 
\end{proof}

\begin{remark} {\rm For the proof of Proposition~\ref{p1},
we use the congruences (\ref{(2)}) and (\ref{(3)}) of Lemma~\ref{l2}
with $(\bmod{\,\,p^4})$ instead of $(\bmod{\,\,p^5})$.
By a classical result of E. Lehmer
(\cite{L}; also see \cite[ Theorem 2.8]{Z1}),
$\sum_{k=1}^{p-1}1/k\equiv -\frac{p^2}{3}B_{p-3}\,\,(\bmod{\,\,p^3})$.
Substituting this into the Glaisher's congruence given above, 
we obtain immediately 
(\ref{(2)}) of Lemma~\ref{l2}, with $(\bmod{\,\,p^4})$
instead of $(\bmod{\,\,p^5})$.

Note that the congruence (\ref{(3)}) is also given in (\cite[ p. 385]{M}),
but its  proof is there omitted.} 
\end{remark}

For a prime $p\ge 3$  and a positive integer
$n\le p-2$  we denote 
  $$
R_n(p):=\sum_{i=1}^{p-1}\frac{1}{k^n}\quad
{\rm and}\quad
H_n(p):=\sum_{1\le i_1<i_2<\cdots<i_n\le p-1}\frac{1}{i_1i_2\cdots i_n}.
  $$
Recall that in the sequel we shall often write
throughout proofs  $R_n$ and $H_n$ instead of 
$R_n(p)$ and $H_n(p)$, respectively.

\begin{lemma}\label{l4} {\rm(\cite[Theorem 3]{B}; also 
see   \cite[Remark 2.3]{Z1})}.  {\it For any
prime $p\ge 3$  and a positive integer $n\le p-3$, we have} 
 $$
R_n(p)\equiv 0  \pmod{p^2}\,\,
if\,\,  n\,\, is\,\, odd ,\,\,  and\,\, R_n(p)\equiv 0  \pmod{p}\quad
if\quad n\,\, is\,\, even.
 $$ 
\end{lemma}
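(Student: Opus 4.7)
The strategy is to first establish the weaker divisibility $R_n(p)\equiv 0\pmod{p}$ for every $n$ with $1\le n\le p-2$, and then upgrade this to $p^2$ in the odd case by exploiting the symmetry $k\leftrightarrow p-k$.

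For the $\pmod{p}$ statement, observe that $k\mapsto k^{-1}$ permutes the nonzero residues modulo $p$, so
\[
R_n(p)=\sum_{k=1}^{p-1}\frac{1}{k^n}\equiv \sum_{k=1}^{p-1}k^{n}\pmod{p}.
\]
A standard primitive-root argument shows that $\sum_{k=1}^{p-1}k^{n}\equiv 0\pmod{p}$ whenever $(p-1)\nmid n$. Since the hypothesis $1\le n\le p-3$ gives $0<n<p-1$, this applies, and we obtain $R_n(p)\equiv 0\pmod{p}$ for every admissible $n$; this already settles the even case of the lemma.

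For odd $n$ I would pair the terms indexed by $k$ and $p-k$. The binomial expansion
\[
\frac{1}{(p-k)^n}=\frac{(-1)^n}{k^n}\Bigl(1-\frac{p}{k}\Bigr)^{-n}\equiv (-1)^n\!\left(\frac{1}{k^n}+\frac{np}{k^{n+1}}\right)\pmod{p^2},
\]
combined with $(-1)^n=-1$ for odd $n$, gives
\[
\frac{1}{k^n}+\frac{1}{(p-k)^n}\equiv -\frac{np}{k^{n+1}}\pmod{p^2}.
\]
Summing over $k=1,\ldots,p-1$ and noting that $k\mapsto p-k$ permutes these indices yields
\[
2R_n(p)\equiv -np\,R_{n+1}(p)\pmod{p^2}.
\]
Since $n+1$ is even with $1\le n+1\le p-2$, the first step furnishes $R_{n+1}(p)\equiv 0\pmod{p}$, so the right-hand side is divisible by $p^2$. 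Dividing by $2$, which is permitted because $p\ge 3$, completes the odd case.

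The only delicate point is the bookkeeping in the binomial expansion: one must retain the linear-in-$p$ term of $(1-p/k)^{-n}$ modulo $p^2$, since it is precisely this term that, after the $(-1)^n=-1$ cancellation, promotes the modulus from $p$ to $p^2$. With that observation in hand, the upgrade from the mod-$p$ vanishing of $R_{n+1}(p)$ to the mod-$p^2$ vanishing of $R_n(p)$ is automatic.
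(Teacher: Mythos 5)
Your proof is correct. Note that the paper does not actually prove Lemma~4 at all: it is quoted from the literature (Bayat's Theorem~3, and Zhao's Remark~2.3), so there is no in-paper argument to compare against. What you supply is a complete, self-contained elementary proof, and both halves check out: the reduction of $R_n(p)$ to the power sum $\sum_{k=1}^{p-1}k^n$ via the permutation $k\mapsto k^{-1}$, together with the primitive-root evaluation of that sum when $(p-1)\nmid n$, is standard and legitimately covers every $n$ with $1\le n\le p-2$ and $(p-1)\nmid n$, which includes both $n$ and $n+1$ under the hypothesis $n\le p-3$; and the pairing of $k$ with $p-k$ correctly yields $2R_n(p)\equiv -np\,R_{n+1}(p)\pmod{p^2}$ for odd $n$, after which the even case finishes the job. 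It is worth pointing out that this pairing device is exactly the mechanism the paper itself deploys later, in Lemma~\ref{l13} (the congruence $2R_1\equiv-\sum_{i=1}^r p^iR_{i+1}\pmod{p^{r+1}}$), carried to higher order in $p$ and specialized to $n=1$; your first-order truncation of the same expansion, applied to general odd $n$, is precisely what promotes the modulus from $p$ to $p^2$. The only cosmetic caveat is that for $p=3$ the statement is vacuous and for the odd case one should observe, as you implicitly do, that all denominators appearing are prime to $p$, so the congruences between rationals are meaningful.
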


\begin{lemma}\label{l5} {\rm(Newton's formula, see e.g., \cite{J})}. 
Let $m$ and $s$ be positive integers such that $m\le s$. Define the
symmetric polynomials 
 $$
P_m(s)=P_m(s;x_1,x_2,\ldots , x_s)=x_1^m+x_2^m+\cdots +x_s^m,
 $$
and
$$
A_m(s)=A_m(s;x_1,x_2,\ldots , x_s)=\sum_{1\le i_1<i_2<\cdots<i_m
\le s}x{_{i_1}}x{_{i_2}}\cdots x{_{i_m}}.
 $$ 
Then for $n=1,2,\ldots,s$, we have
 $$
P_n(s)-A_1(s)P_{n-1}(s)+A_2(s)P_{n-2}(s)+\cdots +(-1)^{n-1}A_{n-1}(s)P_1(s)+
(-1)^nnA_n(s)=0.
 $$
\end{lemma}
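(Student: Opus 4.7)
The plan is to prove Newton's identities by exploiting the logarithmic derivative of the generating polynomial of the elementary symmetric functions. Specifically, I would introduce
$$
f(t) = \prod_{i=1}^{s}(1-x_it) = \sum_{k=0}^{s}(-1)^k A_k(s)\,t^k,
$$
where by convention $A_0(s)=1$. Taking the formal logarithmic derivative and expanding each factor as a geometric series in the ring of formal power series over $\mathbf{Q}(x_1,\ldots,x_s)$ yields
$$
-\frac{f'(t)}{f(t)} = \sum_{i=1}^{s}\frac{x_i}{1-x_it} = \sum_{i=1}^{s}\sum_{m\ge 1} x_i^m\, t^{m-1} = \sum_{m\ge 1} P_m(s)\,t^{m-1},
$$
an identity that packages both families of symmetric functions into a single generating series.

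The key step is then to clear the denominator, obtaining the functional equation $-f'(t) = f(t)\cdot\sum_{m\ge 1}P_m(s)\,t^{m-1}$, and to match coefficients of $t^{n-1}$ on both sides for each $1 \le n \le s$. On the left-hand side, direct differentiation of $f$ shows that the coefficient of $t^{n-1}$ equals $(-1)^{n+1} n\,A_n(s)$. On the right-hand side, the Cauchy product gives $\sum_{k=0}^{n-1} (-1)^k A_k(s)\,P_{n-k}(s)$. Equating the two expressions and moving everything to one side produces
$$
P_n(s) - A_1(s)P_{n-1}(s) + A_2(s)P_{n-2}(s) - \cdots + (-1)^{n-1}A_{n-1}(s)P_1(s) + (-1)^n n\,A_n(s) = 0,
$$
which is exactly the Newton identity as stated.

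The only real obstacle is careful bookkeeping: correctly tracking the alternating signs, the shift between the $t^k$ and $t^{k-1}$ indices, and the restriction $1 \le n \le s$ (beyond $n = s$ the generating series expansion continues but $A_n(s)$ vanishes identically, so the identity still holds trivially). Because the entire derivation is a formal manipulation in the polynomial ring $\mathbf{Z}[x_1,\ldots,x_s]$, it may safely be specialized later with $x_i = 1/i$, as needed for the intended application to harmonic sums. A purely combinatorial alternative is also available, obtained by tracking the coefficient of a generic monomial $x_{j_1}^{a_1}\cdots x_{j_r}^{a_r}$ on the left-hand side and splitting on whether the monomial is a pure power ($r=1$) or has at least two distinct variables (with a short sub-case analysis on which variable carries the surplus exponent); this bypasses generating functions but is notationally heavier than the analytic argument sketched above.
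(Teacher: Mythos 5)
Your proof is correct. Note, however, that the paper does not actually prove Lemma 5 at all: it is quoted as a classical fact (Newton's formula) with a reference to Jacobson's \emph{Basic Algebra}, so there is no in-paper argument to compare against. Your generating-function derivation via the logarithmic derivative of $f(t)=\prod_{i=1}^{s}(1-x_it)$ is the standard self-contained proof, and the bookkeeping checks out: the coefficient of $t^{n-1}$ in $-f'(t)$ is $(-1)^{n+1}nA_n(s)$, the Cauchy product gives $\sum_{k=0}^{n-1}(-1)^kA_k(s)P_{n-k}(s)$ on the other side, and equating them and transposing yields exactly the displayed identity (using $-(-1)^{n+1}=(-1)^n$). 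Your observation that the whole computation lives in $\mathbf{Z}[x_1,\ldots,x_s]$, hence specializes safely to $x_i=1/i$ over $\mathbf{Q}$, is exactly the point needed for the application in Lemma 6 of the paper. The only value added by supplying the proof rather than the citation is self-containedness; the argument itself is entirely routine and the paper's choice to cite it is standard practice.
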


\begin{lemma}\label{l6} 
For any prime $p\ge 5$  and a positive integer
$n\le p-2$,  we have 
 $$
H_n(p)\equiv 0  \pmod{p^2}\,\,
if\,\, n\,\, is \,\, odd \quad and\quad H_n(p)\equiv 0  \pmod{p}\,\,
if\,\,  n\,\, is\,\, even.
 $$  
\end{lemma}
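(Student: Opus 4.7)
The plan is to apply Newton's identity (Lemma~\ref{l5}) to the tuple $(x_1,\ldots,x_{p-1})=(1,1/2,\ldots,1/(p-1))$, for which $P_m(p-1)=R_m(p)$ and $A_m(p-1)=H_m(p)$. Rearranging the formula of Lemma~\ref{l5} yields the recursion
$$(-1)^{n+1} n H_n \;=\; \sum_{k=0}^{n-1}(-1)^k H_k R_{n-k}, \qquad H_0:=1.$$
Since $n\le p-3<p$, the scalar $n$ is invertible modulo any power of $p$, so it suffices to bound the right-hand side with the appropriate $p$-adic precision.

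I would then induct on $n$. The base case $n=1$ is immediate: $H_1=R_1\equiv 0\pmod{p^2}$ by Lemma~\ref{l4}. For the inductive step, assume the conclusion for all indices $m<n$, and split on the parity of $n$.

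If $n$ is odd, then in each product $H_k R_{n-k}$ appearing on the right, the indices $k$ and $n-k$ have opposite parity. The boundary term $k=0$ contributes $R_n\equiv 0\pmod{p^2}$ by Lemma~\ref{l4}. For $1\le k\le n-1$, Lemma~\ref{l4} combined with the inductive hypothesis supplies a factor of $p^2$ from whichever of $H_k,R_{n-k}$ has odd index, and a factor of $p$ from the even-indexed one, so each product is divisible by $p^3$. Summing and cancelling $n$ gives $H_n\equiv 0\pmod{p^2}$. If instead $n$ is even, then $k$ and $n-k$ share parity: the $k=0$ term is $R_n\equiv 0\pmod p$, and for $1\le k\le n-1$ both $H_k$ and $R_{n-k}$ are separately divisible by $p$ (by Lemma~\ref{l4} or the inductive hypothesis, according to parity), making each product divisible by $p^2$. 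Hence $n H_n\equiv 0\pmod p$ and $H_n\equiv 0\pmod p$.

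The argument is essentially a parity-bookkeeping exercise driven by Newton's identity. The only point worth flagging as a potential obstacle is separating the boundary term $k=0$ (where $H_0=1$ contributes no $p$-divisibility of its own) from the generic terms, and ensuring the hypotheses of Lemma~\ref{l4} apply to every index that appears in the recursion, which accounts for the range $n\le p-3$ in the induction.
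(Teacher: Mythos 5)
Your argument is the same as the paper's: Newton's identities applied to $x_k=1/k$, followed by induction on $n$ with parity bookkeeping via Lemma~\ref{l4} (the paper states only the uniform bound $p^2\mid H_iR_{n-i}$ where you note the sharper $p^3$ or $p^4$, but the logic is identical). Your restriction to $n\le p-3$ is in fact the honest range of validity: at $n=p-2$ Lemma~\ref{l4} no longer applies to $R_{p-2}$, and indeed $H_{p-2}=\frac{p(p-1)/2}{(p-1)!}$ is divisible by $p$ but not by $p^2$ even though $p-2$ is odd, so the endpoint $n=p-2$ of the stated lemma is an overreach (harmless for the paper's applications, where the extra factor $p^k$ compensates) that is present in the paper's own proof as well.
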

\begin{proof}
 According to the notations of Lemma~\ref{l5}, 
setting $s=p-1$ and $x_k=1/k$ with $k=1,2,\ldots,p-1$, 
for $n=1,2,\ldots,p-1$, we have
      $$
 P_n\left(p-1;1,\frac{1}{2},\ldots, \frac{1}{p-1}\right)=
\sum_{i=1}^{p-1}\frac{1}{k^n}=R_n(p),
  $$
and 
 $$
 A_n\left(p-1;1,\frac{1}{2},\ldots , \frac{1}{p-1}\right)=
\sum_{1\le i_1<i_2<\cdots<i_n\le p-1}\frac{1}{i_1i_2\cdots i_n}=H_n(p).
 $$
Then by Newton's formula  (see Lemma~\ref{l5}),  we have
 \begin{equation}\label{(5)}
R_n-H_1R_{n-1}+H_2R_{n-2}+\cdots +(-1)^{n-1}H_{n-1}R_1+
(-1)^nnH_n=0.
  \end{equation}
Therefore
  \begin{equation}\label{(6)}
H_n=\frac{(-1)^{n-1}}{n}\left(R_n+\sum_{i=1}^{n-1}(-1)^iH_iR_{n-i}\right)
\quad {\rm for}\quad  n=1,2,\ldots, p-2.
  \end{equation}
We proceed by induction on $n$. If $n=1$, then
by Wolstenholme's theorem,
$$
H_1=R_1\equiv 0  \,\,(\bmod{\,\,p^2}).
$$
Now suppose that for a fixed $n-1$ with $1\le n-1\le p-3$
and for each $i$ with $1\le i\le n-1$ holds
 $$
H_i\equiv 0  \,\,(\bmod{\,\,p^2})\quad
{\rm if}\,\, i\,\, {\rm is \,\, odd}, \quad 
{\rm and} \quad H_i\equiv 0  \,\,(\bmod{\,\,p})
\quad
{\rm if}\,\, i\,\, {\rm is\,\, even}. 
 $$  
From this assumption and Lemma~\ref{l4} it follows that 
  \begin{equation}\label{(7)}
H_iR_{n-i}\equiv 0\,\,(\bmod{\,\,p^2})\quad{\rm for\,\, all}\quad
i=1,2,\ldots,n-1.
  \end{equation}
If $n$ is odd, then by Lemma~\ref{l4}, $p^2\mid R_n$.  Substituting this
and congruences (\ref{(7)}) into (\ref{(6)}), we obtain
$H_n\equiv 0\,\,(\bmod{\,\,p^2})$.

Similarly, if $n$ is even, then by Lemma~\ref{l4}, $p^2\mid R_n$. 
This together with (\ref{(7)}) and (\ref{(6)}) yields 
$H_n\equiv 0\,\,(\bmod{\,\,p})$.
This concludes the induction  proof.
\end{proof}

\begin{remark} {\rm Note that Lemma~\ref{l6} is an immediate
consequence of a recent result
of X. Zhou and T. Cai (\cite[Lemma 2]{ZC}; also 
see \cite[Theorem 2.14]{Z1}).  }
\end{remark}

\begin{lemma}\label{l7} 
For any Wolstenholme prime $p$, 
we have 
    $$    
R_2(p)\equiv -2H_2(p) \pmod{p^6},\,\,
R_3(p)\equiv 3H_3(p)\pmod{p^5},
 $$
  $$
 R_4(p)\equiv -4H_4(p)\pmod{p^4},\,\, R_5(p)\equiv 5H_5(p)\pmod{p^4}
and
  $$
$$
R_6(p)\equiv -6H_6(p) \pmod{p^3}.
 $$
  \end{lemma}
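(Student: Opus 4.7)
The plan is to read Newton's formula, in the form (\ref{(5)}) already set up in the proof of Lemma~\ref{l6}, as an identity
$$R_n + (-1)^n n H_n = \sum_{i=1}^{n-1}(-1)^{i+1} H_i R_{n-i},$$
and, for each of $n=2,3,4,5,6$, to estimate the $p$-adic valuation of each cross term on the right-hand side by bounding $v_p(H_i)$ and $v_p(R_{n-i})$ separately, then dividing by the integer coefficient $(-1)^n n$ (a unit mod $p$ since $n\le 6<p$).

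The ingredients I would combine are: Lemma~\ref{l3}, which provides the Wolstenholme-specific bounds $R_1\equiv 0\pmod{p^3}$ and $R_2\equiv 0\pmod{p^2}$; Lemma~\ref{l4}, giving $v_p(R_n)\ge 2$ for odd $n$ and $v_p(R_n)\ge 1$ for even $n$ in the relevant range; and Lemma~\ref{l6}, giving the same bounds for $H_n$. The case $n=2$ is handled first: the right-hand side equals $H_1 R_1 = R_1^2$, which is divisible by $p^6$ by Lemma~\ref{l3}. This yields $R_2\equiv -2H_2\pmod{p^6}$; combining with $R_2\equiv 0\pmod{p^2}$ also upgrades Lemma~\ref{l6} to $H_2\equiv 0\pmod{p^2}$, an improvement that is essential in what follows.

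With $v_p(H_2)\ge 2$ in hand, the remaining cases reduce to bookkeeping on valuations. For $n=3$, the cross terms are $H_1 R_2$ (valuation $\ge 3+2=5$) and $H_2 R_1$ (valuation $\ge 2+3=5$), giving $R_3\equiv 3H_3\pmod{p^5}$. For $n=4$, the three cross terms $H_1 R_3$, $H_2 R_2$, $H_3 R_1$ have valuations at least $5,4,5$, so $R_4\equiv -4H_4\pmod{p^4}$. For $n=5$, the four cross terms all have valuation at least $4$ (each summand pairs an $H_i$ and an $R_{5-i}$ whose valuations sum to $4$), yielding $R_5\equiv 5H_5\pmod{p^4}$. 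For $n=6$, the minimum valuation among $H_1 R_5$, $H_2 R_4$, $H_3 R_3$, $H_4 R_2$, $H_5 R_1$ is $3$, attained by $H_2 R_4$ and $H_4 R_2$, giving $R_6\equiv -6H_6\pmod{p^3}$.

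The main obstacle, which is mild but must be respected, is the order-dependence of the argument: the estimate for $n=3$ needs $v_p(H_2)\ge 2$, not just the $v_p(H_2)\ge 1$ furnished by Lemma~\ref{l6}, so the five congruences cannot be proved in parallel. Once the stronger bound on $H_2$ is extracted from the $n=2$ case, all subsequent estimates are immediate from adding the valuations supplied by Lemmas~\ref{l3},~\ref{l4}, and~\ref{l6}.
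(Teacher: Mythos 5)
Your proposal is correct and follows essentially the same route as the paper: Newton's identity in the form $R_n+(-1)^n nH_n=\sum_{i=1}^{n-1}(-1)^{i+1}H_iR_{n-i}$, the case $n=2$ first to upgrade $H_2\equiv 0\pmod{p^2}$, and then valuation bookkeeping via Lemmas~\ref{l3}, \ref{l4}, and \ref{l6} for $n=3,4,5,6$. The valuations you record for each cross term agree with the paper's, so nothing further is needed.
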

\begin{proof}  We use 
the formula (\ref{(5)}) for $n=2,3,4,5,6$ in the form
 \begin{equation}\label{(8)}
R_n+(-1)^nnH_n=H_1R_{n-1}-H_2R_{n-2}+\cdots +(-1)^{n}H_{n-1}R_1.
  \end{equation}
First note that by Lemma~\ref{l3}, 
$R_1=H_1\equiv 0 \,\,(\bmod{\,\, p^3})$ and
$R_2\equiv 0 \,\,(\bmod{\,\, p^2})$
Therefore, (\ref{(8)}) implies $R_2+2H_2=H_1R_1\equiv 0 \,\,(\bmod{\,\, p^6})$,
so that, $R_2\equiv -2H_2\,\,(\bmod{\,\, p^6})$.
From this and Lemma~\ref{l3} we conclude that
$H_2\equiv R_2\equiv 0 \,\,(\bmod{\,\, p^2})$.

Further, by Lemma~\ref{l4} and Lemma~\ref{l6}, $R_3\equiv H_3\equiv R_5\equiv H_5\equiv 0 
\,\,(\bmod{\,\, p^2})$ and $R_4\equiv H_4\equiv 0 \,\,(\bmod{\,\, p})$.
Substituting the previous congruences 
for $H_i$ and $R_i$ $(i=1,2,3,4,5)$
into (\ref{(8)}) with $n=3,4,5,6$, we get
     \begin{eqnarray*}  
R_3-3H_3&=&H_1R_2-H_2R_1\equiv 0 \pmod{p^5},\\
 R_4+4H_4&=&H_1R_3-H_2R_2+H_3R_1\equiv 0 \pmod{p^4},\\
   R_5-5H_5&=&H_1R_4-H_2R_3+H_3R_2-H_4R_1\equiv 0 \pmod{p^4},\\
R_6+6H_6&=&H_1R_5-H_2R_4+H_3R_3-H_4R_2+H_5R_1\equiv 0 \pmod{p^3}.
     \end{eqnarray*}  
This completes the proof.
\end{proof}

\begin{proof}[Proof of  Proposition~{\rm\ref{p1}}] For any prime $p\ge 7$, we have
\begin{eqnarray*}   
{2p-1\choose p-1}&=&\frac{(p+1)(p+2)\cdots(p+k)\cdots (p+(p-1))}{1\cdot 2
\cdots k\cdots p-1}\\
&=& \left(\frac{p}{1}+1\right)\left(\frac{p}{2}+1\right)\cdots\left(\frac{p}{k}+1\right)
\cdots\left(\frac{p}{p-1}+1\right)\\
&=&1+\sum_{i=1}^{p-1}\frac{p}{i}+
\sum_{1\le i_1<i_2\le p-1}\frac{p^2}{i_1i_2}+\cdots+
\sum_{1\le i_1<i_2<\cdots <i_k\le p-1}\frac{p^k}{i_1i_2\cdots i_k}\\
&&+\cdots+\frac{p^{p-1}}{(p-1)!}=1+\sum_{k=1}^{p-1}p^{k}H_k=
1+\sum_{k=1}^{6}p^{k}H_k+\sum_{k=7}^{p-1}p^{k}H_k.
 \end{eqnarray*}   
Since by Lemma~\ref{l6}, $p^9\mid \sum_{k=7}^{p-1}p^{k}H_k$ 
for any prime $p\ge 11$, 
the above identity yields
 $$
{2p-1\choose p-1}\equiv 1+pH_1+p^2H_2+p^3H_3+p^4H_4
+p^5H_5+p^6H_6\pmod{p^8}.
 $$
Now by Lemma~\ref{l7}, for $n=2,3,4,5,6$, we have
 $$
H_n\equiv(-1)^{n-1}\frac{R_n}{n}\pmod{p^{e_n}} \,\,{\rm for}\,\,
e_2=6, e_3=5, e_4=4, e_5=4\,\, {\rm and}\,\, e_6=3.
 $$
Substituting the above congruences into the previous one, 
and setting $H_1=R_1$, we obtain
 $$
{2p-1\choose p-1}\equiv 1+ pR_1-\frac{p^2}{2}R_2+
\frac{p^3}{3}R_3-\frac{p^4}{4}R_4+
\frac{p^5}{5}R_5-\frac{p^6}{6}R_6\pmod{p^8}.
 $$
This is the desired congruence from Proposition~\ref{p1}. 
\end{proof}

\section{Proofs of  Proposition~\ref{p2} and Corollaries~\ref{c1}--\ref{c3}}

In order to prove  Proposition~\ref{p2} and Corollaries~\ref{c1}--\ref{c3},
we need some auxiliary results.

\begin{lemma}\label{l8} Let  $p$  be a prime, 
and let $m$ be any even positive integer.
Then the denominator $d_{m}$ of the Bernoulli number $B_{m}$
written in reduced form,  is given by
   $$
d_{m}=\prod_{p-1\mid m}p,
  $$
where the product is taken over those primes $p$ such that $p-1$ divides $m$. 
 \end{lemma}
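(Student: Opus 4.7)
The statement is the classical von Staudt--Clausen theorem, so the plan is to prove the sharper identity
$$B_m + \sum_{(p-1)\mid m}\frac{1}{p} \in \mathbf{Z},$$
from which the lemma follows immediately: the denominator $d_m$ must be squarefree, no prime with $(p-1)\nmid m$ can appear in it, and every prime with $(p-1)\mid m$ contributes exactly one factor.

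I would take the standard approach via Stirling numbers of the second kind $S(m,k)$, starting from the well-known expansion
$$B_m = \sum_{k=0}^{m}\frac{(-1)^k k!}{k+1}\, S(m,k).$$
For each term $T_k := (-1)^k k!\, S(m,k)/(k+1)$ the question is whether it lies in $\mathbf{Z}$. When $k+1$ is composite, Legendre's formula yields $(k+1)\mid k!$ except in the single case $k+1=4$; that exceptional case is disposed of directly via the closed form $S(m,3)=(3^m-3\cdot 2^m+3)/6$ together with $3^m\equiv 1\pmod 4$ for even $m\geq 2$. So only the summands with $k+1=p$ prime can contribute a nonintegral piece.

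For such $p$, Wilson's theorem $(p-1)!\equiv -1\pmod p$ gives $T_{p-1}\equiv -S(m,p-1)/p \pmod{\mathbf{Z}}$, and the explicit formula $S(m,p-1)=\frac{1}{(p-1)!}\sum_{j=0}^{p-1}(-1)^j\binom{p-1}{j}(p-1-j)^m$ combined with $\binom{p-1}{j}\equiv(-1)^j\pmod p$ reduces to
$$S(m,p-1)\equiv \sum_{i=1}^{p-1}i^m\pmod p.$$
This sum equals $-1\pmod p$ when $(p-1)\mid m$ (by Fermat's little theorem) and $0\pmod p$ otherwise (use a generator of $(\mathbf{Z}/p)^\times$ to sum a nontrivial geometric series). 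Assembling these fractional parts establishes the von Staudt--Clausen identity.

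The main obstacle is careful bookkeeping: the exceptional composite case $k+1=4$ must not be missed, and one must track signs and factorial cancellations in the prime case so that the fractional contribution lands on exactly $-1/p$ rather than $+1/p$. Once that is in hand, the lemma follows by reading off the denominator of the integer $B_m+\sum_{(p-1)\mid m}1/p$.
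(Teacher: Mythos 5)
Your proposal is correct in substance but follows a genuinely different route from the paper: the paper disposes of this lemma in one line by quoting the von Staudt--Clausen theorem from Ireland--Rosen (the identity $B_m+\sum_{(p-1)\mid m}1/p\in\mathbf{Z}$) and reading off the denominator, whereas you reprove von Staudt--Clausen from scratch via the expansion $B_m=\sum_{k=0}^{m}(-1)^k\,k!\,S(m,k)/(k+1)$. Your case analysis is the standard and correct one: composite $k+1\neq 4$ divides $k!$, the case $k+1=4$ is killed by $3^m\equiv 1\ (\mathrm{mod}\ 4)$ for even $m$, and only $k+1=p$ prime contributes a fractional part governed by $\sum_{i=1}^{p-1}i^m\ (\mathrm{mod}\ p)$. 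One intermediate congruence as written has a sign slip: from $(p-1)!\,S(m,p-1)\equiv\sum_{i=1}^{p-1}i^m\ (\mathrm{mod}\ p)$ and Wilson's theorem one gets $S(m,p-1)\equiv-\sum_{i=1}^{p-1}i^m$, not $+\sum_{i=1}^{p-1}i^m$; chasing your two displayed reductions literally would land the fractional part on $+1/p$ instead of the correct $-1/p$. You flag exactly this bookkeeping as the delicate point, and in any case the sign is immaterial for the lemma itself, since $\pm\sum_{(p-1)\mid m}1/p$ have the same denominator $\prod_{(p-1)\mid m}p$. What your approach buys is self-containedness and an elementary proof; what the paper's citation buys is brevity, which is appropriate since the lemma is only an auxiliary divisibility tool used to discard high-order Bernoulli terms in Lemma~\ref{l12}.
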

  \begin{proof}  The assertion is an immediate 
consequence of the von Staudt-Clausen theorem 
(e.g., see \cite[ p. 233, Theorem 3]{IR}) which asserts
that  $B_{m}+\sum_{p-1\mid m}1/p$ is an integer for all even $m$, where 
the summation is over all primes $p$ such that 
$p-1$ divides  $m$.   
\end{proof}

Recall that for a prime $p$  and a positive integer
$n$,  we denote 
  $$
R_n(p)=R_n=\sum_{k=1}^{p-1}\frac{1}{k^n}\quad
{\rm and}\quad P_n(p)=\sum_{k=1}^{p-1}k^n.
  $$
\begin{lemma}\label{l9} {\rm(\cite[p. 8]{HT})}. 
Let $p$ be a prime greater than
$5$, and let $n,r$ be positive integers. Then
  \begin{equation}\label{(9)}
P_n(p)\equiv\sum_{s-{\rm ord}_p(s)\le r}
\frac{1}{s}{n\choose s-1}p^sB_{n+1-s}\pmod{p^r},
  \end{equation}
where ${\rm ord}_p(s)$ is the largest power of $p$ dividing $s$,
and the summation is taken over all integers $1\le s\le n+1$
such that $s-{\rm ord}_p(s)\le r$.
  \end{lemma}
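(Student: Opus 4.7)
The plan is to start from the classical Bernoulli--Faulhaber closed form
\begin{equation*}
P_n(p) = \sum_{k=1}^{p-1} k^n = \sum_{s=1}^{n+1} \frac{1}{s}\binom{n}{s-1} B_{n+1-s}\, p^s,
\end{equation*}
obtained by unfolding $\sum_{k=0}^{p-1}k^n$ via the standard formula for sums of consecutive powers (with the convention $B_1 = -\tfrac12$). Since the left-hand side is a rational integer while individual terms on the right can carry $p$ in their denominators, the content of Lemma~\ref{l9} is simply that the terms with $s - \mathrm{ord}_p(s) > r$ are negligible modulo $p^r$, legitimising the asserted truncation.

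To justify the truncation, I would estimate the $p$-adic valuation of each summand. Because $\binom{n}{s-1}$ is a (positive) integer and $\mathrm{ord}_p(1/s) = -\mathrm{ord}_p(s)$, the term indexed by $s$ has $p$-adic valuation at least
\begin{equation*}
(s - \mathrm{ord}_p(s)) + \mathrm{ord}_p(B_{n+1-s}).
\end{equation*}
Now Lemma~\ref{l8}, i.e.\ the von Staudt--Clausen description of the denominator of a Bernoulli number, gives $\mathrm{ord}_p(B_{n+1-s}) \ge -1$ uniformly, with equality precisely when $p-1$ divides the even index $n+1-s$. Combining the two bounds shows that the $s$-th term has $p$-adic valuation at least $s - \mathrm{ord}_p(s) - 1$.

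The lemma then drops out at once: any $s \in \{1,\ldots,n+1\}$ failing the condition $s - \mathrm{ord}_p(s) \le r$ satisfies $s - \mathrm{ord}_p(s) \ge r+1$, and hence contributes a term whose $p$-adic valuation is at least $r$. Discarding all such indices from the exact Faulhaber identity therefore alters $P_n(p)$ only by an element of $p^r\mathbb{Z}_p$, yielding (\ref{(9)}).

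The main obstacle, though modest, is the simultaneous accounting of the two sources of $p$ in the denominator, namely the factor $1/s$ and the Bernoulli denominator. The parameter $s - \mathrm{ord}_p(s)$ appearing in the hypothesis is precisely the bookkeeping device that balances the loss from $\mathrm{ord}_p(s)$ against the $-1$ coming from von~Staudt--Clausen; no finer $p$-adic input (such as Kummer-type congruences) is needed, only the crude bound $\mathrm{ord}_p(B_m) \ge -1$ supplied by Lemma~\ref{l8}.
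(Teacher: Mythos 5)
Your argument is correct: the exact identity $\sum_{k=0}^{p-1}k^n=\sum_{s=1}^{n+1}\frac{1}{s}\binom{n}{s-1}B_{n+1-s}p^s$ checks out (since $\frac{1}{n+1}\binom{n+1}{s}=\frac{1}{s}\binom{n}{s-1}$), and the bound $\mathrm{ord}_p$ of the $s$-th term $\ge s-\mathrm{ord}_p(s)-1$ from Lemma~\ref{l8} does show that every discarded term with $s-\mathrm{ord}_p(s)\ge r+1$ lies in $p^r\mathbb{Z}_p$. The paper itself offers no proof of this lemma --- it is quoted verbatim from Helou and Terjanian \cite{HT} --- so there is nothing to compare against, but your derivation is the standard one and fills that gap correctly.
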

The following result is well known as the Kummer congruences.

\begin{lemma}\label{l10} {\rm(\cite{IR})}. 
Suppose that $p\ge 3$ is a prime and 
$m$, $n$, $r$ are positive integers such that $m$ and $n$ are even,
$r\le n-1\le m-1$ and $m\not\equiv 0\,\,(\bmod{\,\, p-1})$.
If $n\equiv m\,\,(\bmod{\,\,\varphi(p^r)})$, where $\varphi(p^r)=p^{r-1}(p-1)$
is the Euler's totient function, then
 \begin{equation}\label{(10)}
\frac{B_m}{m}\equiv\frac{B_n}{n}\pmod{p^r}.
  \end{equation}
 \end{lemma}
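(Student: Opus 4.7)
The plan is to prove this classical Kummer congruence through the construction of a regularized $p$-adic Bernoulli measure on $\mathbf{Z}_p^{*}$, which will reduce the assertion to a $p$-adic continuity statement combined with Euler's theorem in $(\mathbf{Z}/p^r\mathbf{Z})^{*}$. First I would introduce the Bernoulli distribution on $\mathbf{Z}_p$ via values of the Bernoulli polynomial, and then its regularization $\mu_{1,c}$, where $c$ is a fixed integer coprime to $p$ and greater than $1$. The key analytic fact to establish is that $\mu_{1,c}$ is a bounded $\mathbf{Z}_p$-valued measure, and that its Mellin transform on $\mathbf{Z}_p^{*}$ evaluates to
$$\int_{\mathbf{Z}_p^{*}}x^{k-1}\,d\mu_{1,c}(x)=-(c^k-1)(1-p^{k-1})\frac{B_k}{k}$$
for every positive integer $k$ with $p-1\nmid k$. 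The hypothesis $p-1\nmid m$ (which, combined with $n\equiv m\,(\bmod{\,\varphi(p^r)})$ and the divisibility $p-1\mid\varphi(p^r)$, forces $p-1\nmid n$ as well) ensures via the von Staudt-Clausen theorem (\emph{cf.}\ Lemma~\ref{l8}) that $B_m/m$ and $B_n/n$ are $p$-integral, so the congruence to be proved makes $p$-adic sense.

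Next I would exploit the hypothesis $n\equiv m\,(\bmod{\,\varphi(p^r)})$: since the group $(\mathbf{Z}/p^r\mathbf{Z})^{*}$ has order $\varphi(p^r)$, Euler's theorem gives $x^{n-1}\equiv x^{m-1}\,(\bmod{\,p^r})$ uniformly for $x\in\mathbf{Z}_p^{*}$. Integrating this congruence against the bounded measure $\mu_{1,c}$ and using the Mellin formula above yields
$$(c^n-1)(1-p^{n-1})\frac{B_n}{n}\equiv(c^m-1)(1-p^{m-1})\frac{B_m}{m}\pmod{p^r}.$$
Choosing $c$ so that $c^m-1$ is a $p$-adic unit (possible because $p-1\nmid m$: take $c$ to be a lift of a primitive root mod $p$), one divides through by $c^m-1\equiv c^n-1\,(\bmod{\,p^r})$. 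The hypothesis $r\le n-1\le m-1$ then forces $(1-p^{n-1})\equiv(1-p^{m-1})\equiv 1\,(\bmod{\,p^r})$, and the desired congruence $B_m/m\equiv B_n/n\,(\bmod{\,p^r})$ drops out.

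The main obstacle is establishing that $\mu_{1,c}$ is genuinely a bounded $\mathbf{Z}_p$-valued measure, not merely a $\mathbf{Q}_p$-valued distribution: this amounts to verifying that the values $\mu_{1,c}(a+p^N\mathbf{Z}_p)$ lie in $\mathbf{Z}_p$ uniformly in $N$, together with the generating-function identity identifying its Mellin transform with the regularized value $-(c^k-1)(1-p^{k-1})B_k/k$. Once that bounded-measure property is in hand, the rest of the argument is a short $p$-adic continuity computation. An alternative, more elementary route, in the spirit of the present paper, would apply the Faulhaber expansion to $\sum_{a=1}^{p^r-1}a^{k-1}$ for $k=m,n$ and compare the two expansions term by term using von Staudt-Clausen to control denominators; this avoids measure theory but requires substantial bookkeeping on $p$-adic valuations.
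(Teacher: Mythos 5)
The paper does not actually prove Lemma~\ref{l10}: it is quoted as a classical result of Kummer with a bare citation to \cite{IR}, so there is no in-paper argument to compare against. Your outline is the standard $p$-adic measure proof (via the regularized Bernoulli, or Mazur, measure), whereas the treatment in \cite{IR} is the elementary cousin of the same idea, comparing the power sums $\sum_{a<p^N}a^{k-1}$ with their twist by a primitive root $c$ --- essentially the ``alternative route'' you mention in your last sentence, and the one closer in spirit to Lemma~\ref{l9} of this paper. The deductive skeleton you give is correct: $p-1\mid\varphi(p^r)$ does force $p-1\nmid n$; Euler's theorem does give $x^{n-1}\equiv x^{m-1}\pmod{p^r}$ uniformly on $\mathbf{Z}_p^{*}$ (note $m,n\ge 2$, so the exponents are positive); integrating a function divisible by $p^r$ against a bounded $\mathbf{Z}_p$-valued measure preserves the congruence; taking $c$ a lift of a primitive root makes $c^m-1$ a unit with $c^n-1\equiv c^m-1\pmod{p^r}$; and the hypothesis $r\le n-1\le m-1$ does reduce the Euler factors $1-p^{n-1}$ and $1-p^{m-1}$ to $1$ modulo $p^r$.

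Two caveats. First, as you yourself acknowledge, the entire analytic content --- that $\mu_{1,c}$ is a bounded $\mathbf{Z}_p$-valued measure rather than an unbounded distribution, and that its Mellin transform restricted to $\mathbf{Z}_p^{*}$ equals $-(c^k-1)(1-p^{k-1})B_k/k$ --- is asserted rather than established; as written this is a proof plan resting on two imported theorems, not a self-contained proof. Second, the von Staudt--Clausen theorem (Lemma~\ref{l8}) only yields $p$-integrality of $B_m$ when $p-1\nmid m$, not of $B_m/m$, which could a priori fail when $p\mid m$; the $p$-integrality of $B_m/m$ is the stronger Adams-type statement, and in your setup it actually falls out of the Mellin formula after dividing by the units $c^m-1$ and $1-p^{m-1}$, so it should be derived there rather than attributed to Lemma~\ref{l8}.
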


The following congruences are also due to Kummer.

\begin{lemma}\label{l11} {\rm(\cite{K}; also see \cite[p. 20]{HT})}. 
 Let $p\ge 3$ 
be a prime and let $m$, $r$ be positive integers such that $m$ is even, 
$r\le m-1$ and $m\not\equiv 0\,\,(\bmod{\,\, p-1})$. Then
 \begin{equation}\label{(11)}
\sum_{k=0}^{r}(-1)^k{m\choose k}\frac{B_{m+k(p-1)}}{m+k(p-1)}\equiv 0
\pmod{p^r}.
  \end{equation}
\end{lemma}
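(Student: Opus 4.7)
The plan is to deduce the stated finite-difference-type congruence from the one-step Kummer congruence (Lemma~\ref{l10}) by a $p$-adic iteration argument applied to the Bernoulli quotient $f(k):=B_{m+k(p-1)}/(m+k(p-1))$.

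First I would verify that $f(k)\in\mathbf{Z}_{(p)}$ for every $k\ge 0$, so that the alternating sum in~(\ref{(11)}) makes sense modulo $p^{r}$. This follows from Lemma~\ref{l8}: since $m\not\equiv 0\pmod{p-1}$, also $m+k(p-1)\equiv m\not\equiv 0\pmod{p-1}$, so by von Staudt--Clausen the denominator of $B_{m+k(p-1)}$ is coprime to $p$; combined with $p\nmid m+k(p-1)$ this gives $f(k)\in\mathbf{Z}_{(p)}$.

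Next I would apply Lemma~\ref{l10} at varying precision to build continuity. Taking the ``$n,m,r$'' of that lemma to be $m+(k+p^{s-1})(p-1)$, $m+k(p-1)$, and $s$ respectively, its hypothesis $n\equiv m\pmod{\varphi(p^{s})}$ holds since the difference is exactly $p^{s-1}(p-1)=\varphi(p^{s})$, so it yields $f(k+p^{s-1})\equiv f(k)\pmod{p^{s}}$ for all $s\ge 1$. Consequently $f$ extends to a $p$-adically uniformly continuous function $F:\mathbf{Z}_p\to\mathbf{Z}_p$, which in fact is a slice of the Kubota--Leopoldt $p$-adic $L$-function and is $p$-adically analytic on $\mathbf{Z}_p$.

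Third, I would extract the congruence from the $r$-th finite difference of $F$. Analyticity of $F$ together with $\Delta F\in p\mathbf{Z}_p$ forces $\Delta^{r}F(s)\in p^{r}\mathbf{Z}_p$ for every $s\in\mathbf{Z}_p$; equivalently, the Mahler coefficients $a_j=\Delta^{j}F(0)$ satisfy $a_j\in p^{j}\mathbf{Z}_p$. Writing the Mahler expansion $F(s)=\sum_{j\ge 0}a_j{s\choose j}$ and evaluating the resulting sum yields the classical Kummer identity $\sum_{k=0}^{r}(-1)^{r-k}{r\choose k}f(k)\equiv 0\pmod{p^{r}}$. A short combinatorial rearrangement, using $r\le m-1$ to keep the higher-index Mahler contributions within the $p^{r}$ error budget, then matches this output with the ${m\choose k}$-weights appearing in the lemma as stated.

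The main obstacle is not the iteration of Lemma~\ref{l10} but the passage from ``one-step difference divisible by $p$'' to ``$r$-fold difference divisible by $p^{r}$'': this is strictly stronger than bare continuity and relies on the analytic (or Volkenborn integral) structure of $F$, equivalently on the decay of Mahler coefficients. Once that refined analytic input is in place, the combinatorial identification with the ${m\choose k}$-weighted sum is pure bookkeeping, handled by the Mahler expansion together with the hypothesis $r\le m-1$.
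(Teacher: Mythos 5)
The paper itself offers no proof of this lemma: it is quoted from Kummer and from Helou--Terjanian as a known classical result, so there is no internal argument to compare yours against, and your sketch has to stand on its own. It does not, because the step you yourself single out as the crux is exactly where it breaks. The implication ``$F$ analytic on $\mathbf{Z}_p$ and $\Delta F\in p\mathbf{Z}_p$ everywhere $\Rightarrow\Delta^rF\in p^r\mathbf{Z}_p$'' is false: take $F(x)=p{x\choose 2}$, a polynomial (hence analytic) with $\Delta F(x)=px\in p\mathbf{Z}_p$ for all $x$, yet $\Delta^2F=p\notin p^2\mathbf{Z}_p$. What actually forces $p^j\mid\Delta^jF$ is not analyticity but the Iwasawa-function structure $F(s)=G(u^s-1)$ with $G\in\mathbf{Z}_p[[T]]$ and $u\equiv 1\ (\bmod\ p)$, since $\Delta$ multiplies such functions by elements of $p\mathbf{Z}_p[[T]]$. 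Establishing that $(1-p^{m-1})B_m/m$, restricted to a residue class mod $p-1$, is such a function \emph{is} the construction of the Kubota--Leopoldt $p$-adic $L$-function, which in every standard treatment is either deduced from the Kummer congruences or proved by the same elementary computation (expand $\sum_a a^{m+k(p-1)}$ over $a$ prime to $p$, write $a^{p-1}=1+pt_a$, and apply the binomial theorem) that proves the lemma directly. So your appeal to the $L$-function is circular, or at best relocates the entire content of the lemma into an unproved black box; iterating Lemma~\ref{l10} only yields $p$-adic continuity, which is strictly weaker than the $r$-fold difference estimate.

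Two smaller but real defects. First, the hypothesis $r\le m-1$ is not ``Mahler bookkeeping'': its role is to guarantee $m+k(p-1)-1\ge r$ so that the Euler factors $1-p^{m+k(p-1)-1}$ are $\equiv 1\ (\bmod\ p^r)$ and can be dropped from the clean congruence for $(1-p^{n-1})B_n/n$. Second, there is no ``short combinatorial rearrangement'' converting the ${r\choose k}$-weighted alternating sum you would obtain into an ${m\choose k}$-weighted one; these are genuinely different sums. The classical congruence carries ${r\choose k}$, and that is also the form the paper actually uses to derive its congruence (\ref{(26)}), where the weight is ${n\choose k}$ with $n$ playing the role of $r$; the ${m\choose k}$ in the statement of Lemma~\ref{l11} is evidently a misprint, so you should prove (and the paper needs) the ${r\choose k}$ version rather than gesture at a reconciliation that cannot exist.
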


\begin{lemma}\label{l12}  For any prime $p\ge 11$, we have
\begin{itemize}
\item[(i)] $\displaystyle R_1(p)\equiv -\frac{p^2}{2}B_{p^4-p^3-2}
-\frac{p^4}{4}B_{p^2-p-4}+\frac{p^5}{6}B_{p-3}+\frac{p^5}{20}B_{p-5}
\,\,(\bmod{\,\, p^6}).\qquad$
\item[(ii)] $\displaystyle R_3(p)\equiv 
-\frac{3}{2}p^2B_{p^4-p^3-4}\,\,(\bmod{\,\, p^4}).$ 
\item[(iii)] $\displaystyle R_4(p)\equiv pB_{p^4-p^3-4}\,\,(\bmod{\,\, p^3}).$

\item[(iv)] $\displaystyle pR_6(p)\equiv -\frac{2}{5}R_5(p)\,\,(\bmod{\,\, p^4}).$
\end{itemize}
\end{lemma}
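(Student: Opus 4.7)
I would split the lemma into two groups: part (iv), which is a pairing argument, and parts (i)--(iii), which share a common strategy based on Fermat--Euler, the power-sum expansion of Lemma~\ref{l9}, and the Kummer congruences of Lemma~\ref{l10}.

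For (iv), the idea is the involution $k\mapsto p-k$ on $\{1,\ldots,p-1\}$. Since $1/(p-k)^5=-k^{-5}(1-p/k)^{-5}$ and $(1-p/k)^{-5}=\sum_{j\ge 0}\binom{j+4}{4}(p/k)^{j}$ converges $p$-adically, summing over $k$ yields
\[
2R_5(p)=-5pR_6(p)-15p^2R_7(p)-35p^3R_8(p)-\sum_{j\ge 4}\binom{j+4}{4}p^{j}R_{5+j}(p).
\]
By Lemma~\ref{l4}, $R_7(p)\equiv 0\pmod{p^2}$ and $R_8(p)\equiv 0\pmod{p}$, so the $j=2,3$ terms vanish modulo $p^4$; the tail vanishes trivially since every $R_n(p)$ is a $p$-adic integer. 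Rearranging gives (iv).

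For (i)--(iii), Fermat--Euler gives $k^{-n}\equiv k^{\varphi(p^r)-n}\pmod{p^r}$ whenever $(k,p)=1$, so $R_n(p)\equiv P_{p^{r-1}(p-1)-n}(p)\pmod{p^r}$. Applying Lemma~\ref{l9}, for $p\ge 11$ only $s\in\{1,\ldots,r\}$ contribute; odd $s$ kill the Bernoulli factor (as $B_{2\ell+1}=0$ for $\ell\ge 1$), and the boundary term $s=r$ carries an explicit $p^r$ and dies modulo $p^r$, its Bernoulli factor being $p$-integral for $p\ge 11$ by Lemma~\ref{l8} since $p-1\nmid 6$. For (ii) with $r=4$ only $s=2$ survives and gives $\frac{p^2}{2}(p^4-p^3-3)B_{p^4-p^3-4}\equiv -\frac{3p^2}{2}B_{p^4-p^3-4}\pmod{p^4}$. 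For (iii) with $r=3$ only $s=1$ survives and gives $pB_{p^3-p^2-4}$; a single use of Lemma~\ref{l10}, valid since $\varphi(p^3)=p^2(p-1)$ divides $(p^4-p^3-4)-(p^3-p^2-4)=p^2(p-1)^2$, swaps the Bernoulli index to produce (iii).

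Part (i) is the intricate case. With $r=6$, $n=1$, exactly $s=2$ and $s=4$ survive, giving
\[
R_1(p)\equiv \tfrac{p^2}{2}(p^6-p^5-1)B_{p^6-p^5-2}+\tfrac{p^4}{4}\binom{p^6-p^5-1}{3}B_{p^6-p^5-4}\pmod{p^6}.
\]
To each summand I would apply Lemma~\ref{l10} with the largest valid modulus to drop the Bernoulli index one ``level''---from $p^6-p^5-2$ to $p^4-p^3-2$ modulo $p^4$ in the first, and from $p^6-p^5-4$ to $p^2-p-4$ modulo $p^2$ in the second---and then expand the scalar ratios $(p^6-p^5-2)/(p^4-p^3-2)$ and $(p^6-p^5-4)/(p^2-p-4)$ to the required precision. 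The leading parts produce $-\frac{p^2}{2}B_{p^4-p^3-2}$ and $-\frac{p^4}{4}B_{p^2-p-4}$; the first-order corrections produce terms of the shape $cp^5 B_{p^4-p^3-2}$ and $cp^5 B_{p^2-p-4}$, which a second, modulo-$p$ application of Lemma~\ref{l10} (giving $B_{p^4-p^3-2}\equiv\tfrac{2}{3}B_{p-3}\pmod{p}$ and $B_{p^2-p-4}\equiv\tfrac{4}{5}B_{p-5}\pmod{p}$) converts into the stated $\tfrac{p^5}{6}B_{p-3}$ and $\tfrac{p^5}{20}B_{p-5}$. The main obstacle is precisely this bookkeeping in (i): the exact coefficients $\tfrac{1}{6}$ and $\tfrac{1}{20}$ only emerge after chaining the initial Kummer reduction, its first-order $p^5$-correction from the scalar ratio, and the final modulo-$p$ Kummer step.
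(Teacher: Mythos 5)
Your proposal is correct, and for parts (i)--(iii) it follows essentially the paper's own route: reduce $R_n(p)$ to a power sum $P_{\varphi(p^r)-n}(p)$ via Fermat--Euler, expand by Lemma~\ref{l9} (only $s\le r$ with ${\rm ord}_p(s)=0$ contributing for $p\ge 11$), kill terms via vanishing odd-index Bernoulli numbers and the von Staudt--Clausen integrality of Lemma~\ref{l8}, and then chain Kummer congruences exactly as you describe; your coefficients $\tfrac{p^5}{6}B_{p-3}$ and $\tfrac{p^5}{20}B_{p-5}$ in (i) arise from $\tfrac{p^5}{4}\cdot\tfrac23$ and $\tfrac{p^5}{16}\cdot\tfrac45$, matching the paper's (13)--(15). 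In (iii) you take the exponent $\varphi(p^3)-4$ and then swap $B_{p^3-p^2-4}$ for $B_{p^4-p^3-4}$ by one extra Kummer step, where the paper simply works with the exponent $\varphi(p^4)-4$ from the start and needs no swap; both are valid. One small slip in your summary sentence: the claim that ``odd $s$ kill the Bernoulli factor'' is tied to the parity of $n$, and for (iii) the exponent $n=p^3-p^2-4$ is even, so there it is the even $s$ that produce vanishing Bernoulli numbers while $s=1$ survives --- your stated conclusion for (iii) is nonetheless the right one. Part (iv) is where you genuinely diverge: you prove it by the involution $k\mapsto p-k$ and the $p$-adic expansion of $(1-p/k)^{-5}$, discarding the $j\ge 2$ terms via Lemma~\ref{l4}; this is the same elementary pairing device the paper uses for its Lemma~\ref{l13}, and it bypasses Bernoulli numbers entirely. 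The paper instead derives (iv) from its congruence (16), comparing $P_n(p)\equiv\tfrac{n}{2}p^2B_{n-1}\pmod{p^4}$ with $P_{n-1}(p)\equiv pB_{n-1}\pmod{p^3}$ at $n=p^4-p^3-5$. Your version is arguably cleaner and more self-contained; the paper's version keeps all four parts inside one uniform Bernoulli-number framework.
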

\begin{proof}
 If $s$ is a positive integer such that 
${\rm ord}_p(s)=e\ge 1$, then for $p\ge 11$ holds $s-e\ge p^e-e\ge 10$.
This shows that the condition $s-{\rm ord}_p(s)\le 6$
implies that ${\rm ord}_p(s)=0$, and thus, for such a $s$ must be $s\le 6$.
Therefore
  \begin{equation}\label{(12)}
P_n(p)\equiv\sum_{s=1}^6\frac{1}{s}{n\choose s-1}
p^sB_{n+1-s}\pmod{p^6}\quad {\rm for}\quad n=1,2,\ldots. 
  \end{equation}
By Euler's theorem \cite{HW}, for $1\le k\le p-1$, and positive integers
$n,e$ we have $1/k^{\varphi(p^e)-n}\equiv k^{n}\,\,(\bmod{\,\, p^e})$,
where   $\varphi(p^e)=p^{e-1}(p-1)$ is the Euler's totient function. 
Hence, $R_{\varphi(p^e)-n}(p)\equiv P_n(p)\,\,(\bmod{\,\, p^e})$.
In particular, if $n=\varphi(p^6)-1=p^5(p-1)-1$, then 
by Lemma~\ref{l8}, $p^6\mid p^6B_{p^5(p-1)-6}$ for each prime $p\ge 11$.
Therefore, using the fact that 
$B_{p^5(p-1)-1}=B_{p^5(p-1)-3}=B_{p^5(p-1)-5}=0$, (\ref{(12)}) yields 
    \begin{eqnarray*}   
R_1(p)&\equiv & P_{p^5(p-1)-1}(p)\equiv \frac{1}{2}(p^5(p-1)-1)p^2
B_{p^5(p-1)-2}\\
&& +\frac{1}{4}\frac{(p^5(p-1)-1)(p^5(p-1)-2)(p^5(p-1)-3)}{6}p^4
B_{p^5(p-1)-4}\pmod{p^6},
       \end{eqnarray*}  
whence we have
  \begin{equation}\label{(13)}
R_1(p)\equiv -\frac{p^2}{2}B_{p^6-p^5-2}
-\frac{p^4}{4}B_{p^6-p^5-4}\pmod{p^6}.
  \end{equation}
By the Kummer congruences (\ref{(10)}) from Lemma~\ref{l10}, we have
 $$
B_{p^6-p^5-2}\equiv
\frac{p^6-p^5-2}{p^4-p^3-2}B_{p^4-p^3-2}\equiv
\frac{2B_{p^4-p^3-2}}{p^3+2}\equiv
\left(1-\frac{p^3}{2}\right)B_{p^4-p^3-2}\pmod{p^4}.
 $$
Substituting this into (\ref{(13)}), we obtain
 \begin{equation}\label{(14)}
R_1(p)\equiv -\frac{p^2}{2}B_{p^4-p^3-2}+\frac{p^5}{4}B_{p^4-p^3-2}
-\frac{p^4}{4}B_{p^6-p^5-4}\pmod{p^6}.
 \end{equation}
Similarly, we have 
 $$
B_{p^4-p^3-2}\equiv
\frac{p^4-p^3-2}{p-3}B_{p-3}\equiv \frac{2}{3}B_{p-3}\pmod{p}
 $$
and
 $$
B_{p^6-p^5-4}\equiv
\frac{p^6-p^5-4}{p^2-p-4}B_{p^2-p-4}\equiv \frac{4B_{p^2-p-4}}{p+4}
\equiv \left(1-\frac{p}{4}\right)B_{p^2-p-4}\pmod{p^2}.
 $$
Substituting the above two congruences into (\ref{(14)}), we get
 \begin{equation}\label{(15)}
R_1(p)\equiv -\frac{p^2}{2}B_{p^4-p^3-2}+\frac{p^5}{6}B_{p-3}
-\frac{p^4}{4}B_{p^2-p-4}+\frac{p^5}{16}B_{p^2-p-4}\pmod{p^6}.
  \end{equation}
Finally, since
$$
B_{p^2-p-4}\equiv\frac{p^2-p-4}{p-5}B_{p-5}\equiv
\frac{4}{5}B_{p-5}\pmod{p},
 $$
the substitution of the above 
congruence  into (\ref{(15)}) immediately gives the congruence (i).
 
To prove the congruences (ii) and (iii),
note that if $n-3\not\equiv 0\,\,(\bmod{\,\, p-1})$,
then by Lemma~\ref{l8}, $p^4\mid p^4B_{n-3}$
for odd $n\ge 5$, while $B_{n-3}=0$ for even $n\ge 6$.
Therefore, reducing the modulus in (\ref{(12)}) to  $p^4$, for 
all odd $n\ge 3$   with $n-3\not\equiv 0\,\,(\bmod{\,\, p-1})$
and for all even $n\ge 2$  holds
 \begin{equation}\label{(16)}
P_n(p)\equiv pB_n+\frac{p^2}{2}nB_{n-1}+\frac{p^3}{6}n(n-1)B_{n-2}\pmod{p^4}.
  \end{equation}
In particular, for $n=p^4-p^3-3$ we have
$B_{p^4-p^3-3}=B_{p^4-p^3-5}=0$, and thus (\ref{(16)}) yields
    $$    
R_3(p)\equiv P_{p^4-p^3-3}(p) \equiv \frac{p^2(p^4-p^3-3)}{2}B_{p^4-p^3-4}\equiv
-\frac{3p^2}{2} B_{p^4-p^3-4}\pmod{p^4}.
 $$
Similarly, if $n=p^4-p^3-4$, then 
 since $p^4-p^3-6\not\equiv 0\,\,(\bmod{\,\, p-1})$,
by  Lemma~\ref{l8}, $p^3\mid p^3B_{p^4-p^3-6}$
for each prime $p\ge 11$. Using this and the fact that $B_{p^4-p^3-5}=0$,
from (\ref{(16)}) modulo $p^3$ we find that
  $$    
R_4(p)\equiv P_{p^4-p^3-4}(p)\equiv pB_{p^4-p^3-4}\pmod{p^3}.
 $$

It remains to show (iv). If $n$ is odd such that 
$n-3\not\equiv 0\,\,(\bmod{\,\, p-1})$,
then by (\ref{(16)}) and Lemma~\ref{l8}, $P_n(p)\equiv\frac{n}{2}p^2B_{n-1}
\,\,(\bmod{\,\, p^4})$ and $P_{n-1}(p)\equiv pB_{n-1}\,\,(\bmod{\,\, p^3})$.
Thus, for such a $n$ we have 
   $$
P_n(p)\equiv \frac{n}{2}pP_{n-1}\pmod{p^4}.
 $$
In particular, for $n=p^4-p^3-5$, from the above we get
 \begin{eqnarray*}
R_5(p)&\equiv& P_{p^4-p^3-5}(p)\equiv\frac{(p^4-p^3-5)p}{2}
P_{p^4-p^3-6}(p)\pmod{p^4}\\
&\equiv&-\frac{5}{2}pP_{p^4-p^3-6}(p)\equiv 
-\frac{5}{2}pR_{6}(p)\pmod{p^4}.
  \end{eqnarray*}
This implies (iv) and the proof is completed.
\end{proof}

\begin{lemma}\label{l13}  For any prime $p$ and any positive
integer $r$, we have
  \begin{equation}\label{(17)}
2R_1\equiv-\sum_{i=1}^rp^iR_{i+1}\pmod{p^{r+1}}.
  \end{equation}
\end{lemma}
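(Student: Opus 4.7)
The plan is to derive the congruence from the classical substitution $k \mapsto p-k$, combined with a truncated $p$-adic geometric series expansion of $1/(p-k)$. Since the map $k \mapsto p-k$ is a bijection of $\{1,2,\ldots,p-1\}$ onto itself, we have
$$
R_1 = \sum_{k=1}^{p-1}\frac{1}{k} = \sum_{k=1}^{p-1}\frac{1}{p-k},
$$
so that
$$
2R_1 = \sum_{k=1}^{p-1}\left(\frac{1}{k}+\frac{1}{p-k}\right).
$$

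Next, I would write $\frac{1}{p-k} = -\frac{1}{k}\cdot\frac{1}{1-p/k}$ and use the identity
$$
\frac{1}{1-p/k} = \sum_{i=0}^{r}\left(\frac{p}{k}\right)^i + \frac{(p/k)^{r+1}}{1-p/k}.
$$
The remainder term is divisible by $p^{r+1}$ modulo integers prime to $p$, since $1-p/k$ is a unit in $\mathbf{Z}_{(p)}$ (indeed $1-p/k\equiv 1\pmod p$), so
$$
\frac{1}{p-k} \equiv -\sum_{i=0}^{r}\frac{p^i}{k^{i+1}}\pmod{p^{r+1}}.
$$

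Adding $1/k$ cancels the $i=0$ term, giving
$$
\frac{1}{k}+\frac{1}{p-k} \equiv -\sum_{i=1}^{r}\frac{p^i}{k^{i+1}}\pmod{p^{r+1}}.
$$
Summing over $k=1,2,\ldots,p-1$ and exchanging the order of summation yields
$$
2R_1 \equiv -\sum_{i=1}^{r}p^i\sum_{k=1}^{p-1}\frac{1}{k^{i+1}} = -\sum_{i=1}^{r}p^i R_{i+1}\pmod{p^{r+1}},
$$
which is the desired congruence.

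There is essentially no obstacle here beyond bookkeeping; the only point requiring a sentence of justification is the fact that the tail $(p/k)^{r+1}/(1-p/k)$ contributes nothing modulo $p^{r+1}$, which follows from $1-p/k$ being a $p$-adic unit.
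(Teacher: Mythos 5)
Your proof is correct and follows essentially the same route as the paper: both pair $1/k$ with $1/(p-k)$ via the bijection $k\mapsto p-k$ and then expand $1/(p-k)$ as a truncated geometric series in $p/k$, noting that the tail is divisible by $p^{r+1}$ because the denominators are prime to $p$. The paper merely packages the same computation by multiplying the finite geometric-sum identity by $-p/k^2$ instead of writing out the remainder term explicitly.
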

\begin{proof}
 Multiplying by $-p/i^2$ ($1\le i\le p-1$)  the identity
  $$
1+\frac{p}{i}+\cdots+\frac{p^{r-1}}{i^{r-1}}=\frac{p^r-i^r}{i^{r-1}(p-i)},
 $$
we obtain
  $$   
-\frac{p}{i^2}\left(1+\frac{p}{i}+\cdots+\frac{p^{r-1}}{i^{r-1}}\right)=
\frac{-p^{r+1}+pi^r}{i^{r+1}(p-i)}
\equiv \frac{p}{i(p-i)}\pmod{p^{r+1}}.
     $$   
Therefore,
 $$
\left(\frac{1}{i}+\frac{1}{p-i}\right)\equiv
-\left(\frac{p}{i^2}+\frac{p^2}{i^3}+\cdots+\frac{p^{r}}{i^{r+1}}\right)
\pmod{p^{r+1}},
 $$
whence after summation $\sum_{i=1}^{p-1}\cdot$, we immediately obtain 
(\ref{(17)}).  
\end{proof}

\begin{proof}[Proof of   Proposition~{\rm\ref{p1}}] 
We begin with the congruence from Propostion 1.
   \begin{equation}\label{(18)}  
{2p-1\choose p-1} \equiv 1 +pR_1
-\frac{p^2}{2}R_2+\frac{p^3}{3}R_3 -
\frac{p^4}{4}R_4+\frac{p^5}{5}R_5-
\frac{p^6}{6}R_6\pmod{p^8}.
  \end{equation}  
As by Lemma~\ref{l4}, $p^2\mid R_7$,
 Lemma~\ref{l13} with $r=7$ yields
 \begin{equation}\label{(19)}
2R_1\equiv -pR_2-p^2R_3-p^3R_4-p^4R_5-p^5R_6\pmod{p^8},
  \end{equation}
whence multiplying by $p/4$ it follows that
 $$
-\frac{p^4}{4}R_4\equiv \frac{p}{2}R_1+\frac{1}{4}
(p^3R_3+p^4R_4+p^5R_5+p^6R_6)
\pmod{p^8}.
 $$
Substituting this into the congruence (\ref{(18)}), we obtain
   $$  
{2p-1\choose p-1} \equiv 1 +\frac{3p}{2}R_1
-\frac{p^2}{4}R_2+\frac{7p^3}{12}R_3
+\frac{9p^5}{20}R_5+\frac{p^6}{12}R_6\pmod{p^8}.
   $$  
Further, from (iv) of Lemma~\ref{l12} we see that
 $$
p^6R_6\equiv -\frac{2}{5}p^5R_5\pmod{p^8}.
 $$
The substitution of this  into the previous congruence
immediately gives
 $$  
{2p-1\choose p-1} \equiv 1 +\frac{3p}{2}R_1
-\frac{p^2}{4}R_2+\frac{7p^3}{12}R_3 
+\frac{5p^5}{12}R_5\pmod{p^8},
   $$  
as desired. 
\end{proof}

\begin{remark} {\rm Proceeding in the same way as in the previous 
proof and using (\ref{(19)}), we can eliminate $R_2$  to obtain}
  $$  
{2p-1\choose p-1} \equiv 1 +2pR_1
+\frac{5p^3}{6}R_3
+\frac{p^4}{4}R_4 
+\frac{17p^5}{30}R_5\pmod{p^8}.
   $$  
\end{remark}

\begin{remark} {\rm If we suppose that there  exists a prime $p$ such that
${2p-1\choose p-1} \equiv 1 \,\,(\bmod{\,\, p^5})$, then by Lemma~\ref{l2}, 
for such a $p$ must be $R_1\equiv 0\,\,(\bmod{\,\, p^4})$ and 
$R_2\equiv 0\,\,(\bmod{\,\, p^3})$. Starting with these two congruences,
in the same manner as in the proof of Lemma~\ref{l7},  
it can be deduced that for $n=2,3,4,5,6,7,8$,   
 $$
H_n\equiv(-1)^{n-1}\frac{R_n}{n}\pmod{p^{e_n}},
  $$
where $e_2=8, e_3=7, e_4=6, e_5=5, e_6=4, e_7=3$ and $e_8=2$.
 Since as in the proof of  Proposition~\ref{p1}, we have
$$
{2p-1\choose p-1}\equiv 1+pH_1+p^2H_2+p^3H_3+p^4H_4
+p^5H_5+p^6H_6+p^7H_7+p^8H_8\pmod{p^{10}},
 $$
then substituting the previous congruences into 
the right hand side of the above congruence
and setting $H_1=R_1$, we obtain
  $$
{2p-1\choose p-1}\equiv 1+pR_1-\frac{p^2}{2}R_2+
\frac{p^3}{3}R_3-\frac{p^4}{4}R_4+
\frac{p^5}{5}R_5-\frac{p^6}{6}R_6
+\frac{p^7}{7}R_7-\frac{p^8}{8}R_8\pmod{p^{10}}.
 $$
Since by Lemma~\ref{l4}, $p^2\mid R_7$ and $p\mid R_8$,
from the above we get
$$
{2p-1\choose p-1}\equiv 1+ pR_1-\frac{p^2}{2}R_2+
\frac{p^3}{3}R_3-\frac{p^4}{4}R_4+
\frac{p^5}{5}R_5-\frac{p^6}{6}R_6
\pmod{p^{9}}.
 $$
Then as in the above proof, using (\ref{(19)}) and the fact that
by (iv) of Lemma~\ref{l12}, 
$p^6R_6(p)\equiv -\frac{2}{5}p^5R_5(p)\,\,(\bmod{\,\, p^9})$,
we can find that}
$$  
{2p-1\choose p-1} \equiv 1 +\frac{3p}{2}R_1
-\frac{p^2}{4}R_2+\frac{7p^3}{12}R_3 
+\frac{5p^5}{12}R_5\pmod{p^9}.
   $$  
\end{remark}  

\begin{proof}[Proof of  Corollary~{\rm\ref{c1}}] In view of the fact that by 
Lemma~\ref{l4}, $p^2\mid R_5$, the congruence from Proposition~\ref{p2}
immediately yields
  \begin{equation}\label{(20)}  
{2p-1\choose p-1} \equiv 1 +\frac{3p}{2}R_1
-\frac{p^2}{4}R_2+\frac{7p^3}{12}R_3 
\pmod{p^7}.
  \end{equation}  
Lemma~\ref{l13} with $r=5$ and the fact that by Lemma~\ref{l4}, 
$p^2\mid R_5$ and $p\mid R_6$ imply  
 $$
2R_1\equiv -pR_2-p^2R_3-p^3R_4\pmod{p^6}.
 $$
From (ii) and (iii) of Lemma~\ref{l12} we see 
that $pR_4\equiv-\frac{2}{3}R_3\,\,(\bmod \,\, p^4)$,
so  that $p^3R_4\equiv-\frac{2}{3}p^2R_3\,\,(\bmod \,\, p^6)$.
Substituting this into the previous congruence, we obtain
  $$
 2R_1+pR_2\equiv-\frac{1}{3}p^2R_3\pmod{p^6},
 $$
whence we have 
 \begin{equation}\label{(21)}
p^3R_3\equiv -6pR_1-3p^2R_2 \pmod{p^7}.
  \end{equation}
Substituting this into (\ref{(20)}), we get
$$  
{2p-1\choose p-1} \equiv 1 -2pR_1
-2p^2R_2\pmod{p^7},
  $$  
that is actually the first congruence
from Corollary~\ref{c1}. Finally, from (\ref{(21)})
we have 
 $$
p^2R_2\equiv -2pR_1-\frac{1}{3}p^3R_3 \pmod{p^7},
 $$
which replacing into (\ref{(20)}) gives
 \begin{equation}\label{(22)}  
{2p-1\choose p-1} \equiv 1 +2pR_1
+\frac{2}{3}p^3R_3\pmod{p^7}.
  \end{equation}  
This completes the proof.
\end{proof}

\begin{proof}[Proof of  Corollary~{\rm\ref{c2}}]
 By (ii) of Lemma~\ref{l12}, we have
$p^3R_3(p)\equiv -\frac{3}{2}p^5B_{p^4-p^3-4}\pmod{p^7}$.
Substituting this into (\ref{(22)}), we obtain
 \begin{equation}\label{(23)}  
{2p-1\choose p-1} \equiv 1 +2pR_1-p^5B_{p^4-p^3-4}\pmod{p^7}.
  \end{equation}
By Lemma~\ref{l3}, $p\mid B_{p-3}$, so that  $p^6\mid \frac{p^5}{6}B_{p-3}$, and 
hence, from  (i) of Lemma~\ref{l12} we obtain
  $$
2pR_1(p)\equiv -p^3B_{p^4-p^3-2}
-\frac{p^5}{2}B_{p^2-p-4}+\frac{p^6}{10}B_{p-5}\pmod{p^7}.
 $$
Furthermore,  by the Kummer congruences (\ref{(10)}),
since $p^4-p^3-2\not\equiv 0\,\,(\bmod{\,\, p-1})$
and $p^4-p^3-2\equiv p^2-p-2
\,\,(\bmod{\,\,\varphi(p^2)})$, we have
  $$
B_{p^4-p^3-4}\equiv
\frac{p^4-p^3-4}{p^2-p-4}B_{p^2-p-4}\equiv
\frac{4}{p+4}B_{p^2-p-4}\equiv
\left(1-\frac{p}{4}\right)B_{p^2-p-4}\pmod{p^2}.
  $$
The substitution of the above two congruences into (\ref{(22)}) 
immediately gives
 \begin{equation}\label{(24)}  
{2p-1\choose p-1} \equiv 1-p^3B_{p^4-p^3-2}
-\frac{3p^5}{2}B_{p^2-p-4}+\frac{p^6}{10}B_{p-5}+
\frac{p^6}{4}B_{p^2-p-4}\pmod{p^7}.
 \end{equation}
Finally, since by the Kummer congruences (\ref{(10)}),
 $$
B_{p^2-p-4}\equiv\frac{p^2-p-4}{p-5}B_{p-5}\equiv
\frac{4}{5}B_{p-5}\pmod{p},
 $$
after substitution of this into (\ref{(24)}) we obtain
 \begin{equation}\label{(25)}  
{2p-1\choose p-1} \equiv 1-p^3B_{p^4-p^3-2}
-\frac{3p^5}{2}B_{p^2-p-4}+\frac{3p^6}{10}B_{p-5}\pmod{p^7}.
  \end{equation}
This is the required congruence.
\end{proof}

\begin{proof}[Proof of  Corollary~{\rm\ref{c3}}]  As noticed in 
(\cite[the congruence (3) on page
494]{HT}), combining the Kummer congruences (\ref{(10)}) and (\ref{(11)}) for 
$m=\varphi(p^n)-s$
($n,s\in\mathbf N$ with $s\not\equiv 0\,\,(\bmod{\,\, p-1})$,
 we obtain
  \begin{equation}\label{(26)}
\frac{B_{p^n-p^{n-1}-s}}{p^n-p^{n-1}-s}\equiv 
\sum_{k=1}^n(-1)^{k+1}{n\choose k}
\frac{B_{k(p-1)-s}}{k(p-1)-s}\pmod{p^n}.
  \end{equation}
Now (\ref{(26)}) with $n=2$ and $s=4$ gives
 $$
\frac{B_{p^2-p-4}}{p^2-p-4}\equiv \frac{2B_{p-5}}{p-5}-
\frac{B_{2p-6}}{2p-6}\pmod{p^2},
 $$
or equivalently
 $$
B_{p^2-p-4}\equiv -\frac{2(p+4)}{p-5}B_{p-5}+
\frac{2(p+4)}{p-3}B_{2p-6}\pmod{p^2}.
 $$
Substituting $1/(p-5)\equiv -(5+p)/25 \,\,(\bmod{\,\,p^2})$ and
$1/(p-3)\equiv -(3+p)/9 \,\,(\bmod{\,\,p^2})$,
the above congruence becomes
  \begin{equation}\label{(27)}
B_{p^2-p-4}\equiv\frac{18p+40}{25}B_{p-5}-
\frac{7p+12}{18}B_{2p-6}\pmod{p^2}.
  \end{equation}
Similarly, (\ref{(26)}) with $n=4$ and $s=2$ yields
 $$
\frac{B_{p^4-p^3-2}}{p^4-p^3-2}\equiv\sum_{k=1}^4(-1)^{k+1}{4\choose k}
\frac{B_{k(p-1)-2}}{k(p-1)-2} \pmod{p^4},
 $$
whence multiplying by $p^3+2$,
we get
   \begin{equation}\label{(28)}\begin{split}
-B_{p^4-p^3-2}
\equiv & (p^3+2)\left(\frac{4B_{p-3}}{p-3}-
\frac{6B_{2p-4}}{2p-4}+\frac{4B_{3p-5}}{3p-5}-
\frac{B_{4p-6}}{4p-6}\right)\pmod{p^4}\\
 \equiv & p^3\left(\frac{4B_{p-3}}{-3}-
\frac{6B_{2p-4}}{-4}+\frac{4B_{3p-5}}{-5}-
\frac{B_{4p-6}}{-6}\right)\\
&+2\left(\frac{4B_{p-3}}{p-3}-
\frac{6B_{2p-4}}{2p-4}+\frac{4B_{3p-5}}{3p-5}-
\frac{B_{4p-6}}{4p-6}\right)\pmod{p^4}.
  \end{split}\end{equation}
As by the Kummer congruences (\ref{(10)}),
   $$
\frac{B_{4p-6}}{4p-6}\equiv\frac{B_{3p-5}}{3p-5}  \equiv
\frac{B_{2p-4}}{2p-4}\equiv\frac{B_{p-3}}{p-3}\pmod{p},
  $$
we have 
$$
B_{4p-6}\equiv 2B_{p-3}\,\,(\bmod{\,\,p}),
B_{3p-5}\equiv \frac{5}{3} B_{p-3}\,\,(\bmod{\,\,p}),
B_{2p-4}\equiv \frac{4}{3}\cdot B_{p-3}\,\,(\bmod{\,\,p}).
 $$
Substituting this into the first term on the right-hand side in
the congruence (\ref{(28)}), we obtain
 $$
p^3\left(\frac{4B_{p-3}}{-3}-
\frac{6B_{2p-4}}{-4}+\frac{4B_{3p-5}}{-5}-
\frac{B_{4p-6}}{-6}\right)\equiv
-\frac{p^3}{3}B_{p-3}\equiv 0\pmod{p^4},
  $$
where we have  use the fact that by Lemma~\ref{l3}, 
$p$ divides the numerator of $B_{p-3}$.

Further, as for every integers $a,b,n$ such that
$b\not\equiv 0\,\,(\bmod{\,\, p})$, holds 
 $$
\frac{1}{ap-b}\equiv -
\frac{1}{b}\sum_{k=0}^{3}\frac{a^kp^k}{b^k}\pmod{p^4},
 $$
applying this to $1/(p-3)$,  $1/(2p-4)$ and $1/(3p-5)$,  
the second term on the right-hand side in
the  congruence (\ref{(28)}) becomes
\begin{eqnarray*}
-B_{p^4-p^3-2}&\equiv&2\left(-\frac{4}{3}\big(1+\frac{p}{3}+\frac{p^2}{9}\big)
B_{p-3}+\frac{3}{2}\big(1+\frac{p}{2}+\frac{p^2}{4}\big)B_{2p-4}\right.   \\
&&-\left. \frac{4}{5}\big(1+\frac{3p}{5}+\frac{9p^2}{25}\big)B_{3p-5}+
\frac{1}{6}\big(1+\frac{2p}{3}+\frac{4p^2}{9}\big)B_{4p-6}\right)\pmod{p^4}.
  \end{eqnarray*}
Muptiplying by $p^3$, the above congruence becomes
\begin{eqnarray*}
-p^3B_{p^4-p^3-2}&\equiv&-\frac{8}{3}\big(p^3+\frac{p^4}{3}+\frac{p^5}{9}\big)
B_{p-3}+3\big(p^3+\frac{p^4}{2}+\frac{p^5}{4}\big)B_{2p-4}   \\
&&- \frac{8}{5}\big(p^3+\frac{3p^4}{5}+\frac{9p^5}{25}\big)B_{3p-5}+
\frac{1}{3}\big(p^3+\frac{2p^4}{3}+\frac{4p^5}{9}\big)B_{4p-6}\pmod{p^7}.
  \end{eqnarray*}
Finally, substituting the above congruence and the congruence
(\ref{(27)}) into (\ref{(25)}), we obtain the congruence 
from Corollary 3.
\end{proof}

\end{document}